\def\@rmrk#1#2{\refstepcounter
    {#1}\@ifnextchar[{\@yrmrk{#1}{#2}}{\@xrmrk{#1}{#2}}}
\makeatletter\@addtoreset{equation}{section}\makeatother
 \newfont{\bfit}{cmbxti10 scaled 2000}
 \newfont{\biggi}{cmr12 scaled 2000}
 \newcommand{\eps}{\varepsilon}
 \newcommand{\R}{\mathbb{R}}
 \newcommand{\N}{\mathbb{N}}
 \newcommand{\prob}{\mathbb{P}}
 \newcommand{\me}{\mathbb{E}}
 \renewcommand{\P}{\mathbb{P}}
 \newcommand{\skrib}{{\mathcal B}}
 \newcommand{\skric}{{\mathcal C}}
 \newcommand{\skrid}{{\mathcal D}}
 \newcommand{\skrie}{{\mathcal E}}
 \newcommand{\skrig}{{\mathcal G}}
 \newcommand{\skrih}{{\mathcal H}}
 \newcommand{\skrik}{{\mathcal K}}
 \newcommand{\skrim}{{\mathcal M}}
 \newcommand{\skriw}{{\mathcal W}}
 \newcommand{\skris}{{\mathcal S}}
 \newcommand{\skriy}{{\mathcal Y}}
 \newcommand{\sfrac}[2]{\mbox{$\frac{#1}{#2}$}}
\def\1{{\mathchoice {1\mskip-4mu\mathrm l}      
{1\mskip-4mu\mathrm l}
{1\mskip-4.5mu\mathrm l} {1\mskip-5mu\mathrm l}}}
\newcommand{\eq}{\begin{equation}}
\newcommand{\en}{\end{equation}}
\newenvironment{Proof}
{\vskip0.1cm\noindent{\bf Proof. }{\hspace*{0.3cm}}}%
{\nopagebreak {\hspace*{\fill}\rule{2mm}{2mm}}\\ }
\renewcommand{\subsection}{\secdef \subsct\sbsect}
\newcommand{\subsct}[2][default]{\refstepcounter{subsection}
\vspace{0.15cm}
{\flushleft\bf \arabic{section}.\arabic{subsection}~\bf #1  }
\nopagebreak\nopagebreak}
\newcommand{\sbsect}[1]{\vspace{0.1cm}\noindent
{\bf #1}\vspace{0.1cm}}
\newtheorem{theorem}{Theorem}[section]
\newtheorem{lemma}[theorem]{Lemma}
\newtheorem{cor}[theorem]{Corollary}
\newtheoremstyle{thm}{1.5ex}{1.5ex}{\itshape\rmfamily}{}
{\bfseries\rmfamily}{}{2ex}{}
\newtheoremstyle{rem}{1.3ex}{1.3ex}{\rmfamily}{}
{\itshape\rmfamily}{}{1.5ex}{}
\theoremstyle{rem}
\newtheorem{remark}{{\slshape\sffamily remark}}[]
\def\thebibliography#1{\section*{References}
  \list%
  {\arabic{enumi}.}
    {\settowidth\labelwidth{[#1]}\leftmargin\labelwidth
    \advance\leftmargin\labelsep
    \parsep0pt\itemsep0pt
    \usecounter{enumi}}
    \def\newblock{\hskip .11em plus .33em minus .07em}
    \sloppy                   
    \sfcode`\.=1000\relax}
\begin{document}
\title[LLDP  and LDP  for Super-critical Communication Networks]
{\Large  Large  Deviations, Sharron-McMillan-Breiman Theorem   for  Super-Critical  Telecommunication  Networks}

\author[]{}

\maketitle
\thispagestyle{empty}
\vspace{-0.5cm}

\centerline{\sc By  E.  Sakyi-Yeboah$^1$, P. S. Andam$^1$,  L.  Asiedu$^1$ and  K.  Doku-Amponsah$^{1,2} $}

{$^1$Department  of  Statistics and Actuarial  Science, University of  Ghana,  BOX  LG 115, Legon,Accra}

{$^2$ Email: kdoku-amponsah@ug.edu.gh}

{$^2$ Telephone: +233205164254}
\vspace{0.5cm}

\renewcommand{\thefootnote}{}
\footnote{\textit{Acknowledgement: }  This  Research work has been  supported  by funds  from  the  Carnegie Banga-Africa Project,  University  of  Ghana}
\renewcommand{\thefootnote}{1}



\vspace{-0.5cm}

\begin{quote}{\small }{\bf Abstract.}
In  this  article  we  obtain  large  deviation  asymptotics  for  supercritical communication  networks  modelled  as   signal-interference-noise  ratio  networks. To  do  this,  we  define  the  empirical  power  measure  and  the  empirical  connectivity  measure,  and  prove   joint large  deviation  principles(LDPs)  for the    two  empirical  measures on  two  different  scales i.e. $\lambda$  and  $\lambda^2 a_{\lambda},$  where   $\lambda$  is  the intensity  measure  of  the   poisson  point  process (PPP)  which defines   the  SINR  random network.Using  this  joint LDPs  we  prove  an  asymptotic  equipartition  property   for  the  stochastic  telecommunication  Networks  modelled  as  the  SINR  networks. Further,  we  prove  a   Local  large  deviation  principle(LLDP)  for  the  SINR  Network.  From  the  LLDP  we  prove  the  a  large  deviation  principle,  and  a  classical  McMillian  Theorem   for  the stochastic  SNIR  network  processes. Note, for  tupical  empirical  connectivity measure, $q\pi\otimes\pi,$ we  can  deduce  from  the  LLDP    a  bound  on  the   cardinality  of  the  space of  SINR  networks  to  be  approximately  equal  to $\displaystyle e^{\lambda^2 a_{\lambda}\|q\pi\otimes\pi\|H\big(q\pi\otimes\pi/\|q\pi\otimes\pi\|\big)},$
where  the  connectivity probability  of  the  network,  $Q^{z^\lambda} ,$  satisfies  $ a_{\lambda}^{-1}Q^{z^\lambda} \to q.$  Observe,  the  LDP  for the  empirical measures  of the  stochastic  SINR  network  were obtained    on  spaces  of  measures  equipped  with  the  $\tau-$  topology, and  the  LLDPs  were obtained in  the  space  of  SINR  network  process  without  any  topological  restrictions.

\end{quote}\vspace{0.5cm}

\textit{Keywords: } Super-critical sinr networks, Poisson  Point  Process, Empirical power measure, Empirical connectivity measure,  Large deviations, Relative  entropy, Entropy

\vspace{0.3cm}

\textit{AMS Subject Classification:} 60F10, 05C80, 68Q87
\vspace{0.3cm}


\section{Introduction   and Background}
\subsection{Introduction}.
Large  deviations may  be  regarded  as  a  group  of  mathematical  techniques (stochastic  methods) often  use  to  estimate  asymptotic  properties  of  increasingly  rare  events  such  as their  empirical measures and  most  likely  manner  of  occurrence.  See, for  example,  \cite{We1995}.  There  are  many  applications  of  large deviation  techniques  to  SINR  networks as   a model  for  Telecommunication  networks. Some  of this  applications  include  but  not  limited  to  the  analysis  of  bi-stability  in  networks, example  notorious  bi-stability  in  multiple  access  protocols  such  as  the  Aloha,  and the  stochastic  behaviour  of  ATM  such as the  admission control,  sizing  of  internal  buffers,  and  the  simulation  of  ATM models. See,\cite{We1995}.\\

The  Shanno-MacMillian-Breiman (SMB)  Theorem  or  tte  asymptotic  equipartition  property   may  be  regarded  as  the  strong  law  of  large  numbers in  information  theory.  It says  output  source  of  a  stochastic  data  source  may  be  partition into  two  sets, namely  the  set  of  typical  events  and  the  set  of  atypical  events. The  SMB  is  the foundation  of  all  approximate  pattern  matching  and  coding  algorithms.

Researchers over the  last  two  decades have  given  some  large  deviation  analysis  for   telecommnunication networks  modelled  as a  sequence  of  i.i.d random  variables  and or   markov  chains in  discrete and  continuous  times.  See, \cite{We1995}  and reference  therein. \cite{SAD2020}  and  \cite{SKAD2020}  defined  empirical  measures  on  the  SINR  network  and  proved some  jonit  LDP  results   including  the  SMB  and  the classical  MacMillian  theorem  for the  dense  or critical   telecommunication  networks  modelled  as  the  SINR  network.\\

In  this  article  we  prove   joint  large  deviation  principles  on  the  scales  $\lambda$  and  $\lambda^2a_{\lambda}$,  where  $\lambda$ is the  intensity  measure  of  the  underlining PPP  of  the  SINR  network. See, \cite{DA2006}  or  \cite{DM2010}    for  similar  results  fore  the  colored  random  graph  models.From  these  LDPs  we  prove  an  asymptotic  equipartition  property, see example  \cite{DA2012},  for  the  SINR  networks.\\

Further,  we  prove  a  local  LDP for the  SINR  networks. See example, \cite{DA2017a} or \cite{DA2017c} and reference therein. From  the  local  LDP  we deduce  asymptotic  bounds  on  the  cardinality  of  the  set  of  SINR  networks  for  a  given  typical   empirical  power  measure. We  also  prove  from  the local  LDP  an  LDP for  the  SINR  network  processes.

The  remaining  part  of  the  article  is  organized  in  this  manner:  Section~\ref{Sec2} contains  the  main  results; Theorem~\ref{main1a}, Theorem~\ref{main1a-1} Theorem~\ref{main1b}, Theorem~\ref{main1c}, Corollary~\ref{cardinality}  and  Corollary~\ref{main2d}. In  Section~\ref{Sec3}  the  main  results of  the  article, Theorem~\ref{main1a}.  Section~\ref{Sec4} contain  proof  of  the  SBM,  see  Theorem~\ref{main1b}  and  Section~\ref{Sec5}; Proof  of  Theorem~\ref{main1c}, Corollary~\ref{cardinality} and  Corollary~\ref{main2d}. Finally.  we  give  the conclusion  of  the  article  in  Section~\ref{Sec6}

\subsection{Background}\label{SINR}\label{Sec1}

We fix  dimension  $d\in\N$    and some  measureable  set  $\skrid\subset \R^d$    with  respect  to  the  Borel-Sgma  algebra  $\skrib(\R^d).$ 
For an  intensity function,  $\lambda \pi:\skrid \to [0,1]$,  a transition kernel from  $\skrid$   to  $(0,\,\infty),$  $\skrik$ and      a  path loss  function, $\beta(\ell)=\ell^{-r}, $   where  $r \in(0,\infty),$ and   
  some technical constants; $\tau^{(\lambda)}, \gamma^{(\lambda)}:(0\,,\,\infty)\to (0\,,\,\infty),$ we  define  the  Sinr network  model  as  follows:

\begin{itemize}
\item  We pick  $Z=(Z_i)_{i\in I}$   a  Poisson  Point  process  (PPP)  with rate measure  $\lambda \pi:D \to [0,1]$.
\item Given  $Z,$   we    assign  each  $Z_i  $   a  power  $\eta(Z_i)=\eta_i$  independently  according  to  the  transition  function   $\skrik(\cdot \,,\,Z_i).$
\item For   any  two  powered  points  $((Z_i,\eta_i),(Z_j,\eta_j))$  we    connect  a  link  iff  $$SINR(Z_i,Z_j, Z) \ge \tau^{(\lambda)}(\eta_j) \mbox{  and      $SINR(Z_j,Z_i, Z)\ge \tau^{(\lambda)}(\eta_i),$}$$  where $$SINR(Z_j,Z_i, Z)=\frac{\eta_i \beta(\|Z_i-Z_j\|)}{N_0 +\gamma^{(\lambda)}(\eta_j)\sum_{i\in I\setminus\{j\} }\eta_i\beta(\|Z_i-Z_j\|)}$$
\end{itemize}

We shall  consider  $Z^{\lambda}:=Z^{\lambda}(\eta, \skrik, \beta)=\Big\{[(Z_i,\eta_i), j\in I], \, E\Big\}$  under  the  joint  law  of  the  powered  Poisson Point  Process  and    the  Network.  We  will  interpret   $Z^{\lambda}$    as   an SINR  Network  and   $ (Z_i,\eta_i):= Z_i^{\lambda}$   as  the  power type  of    device  $i.$  We  recall   from  \cite{SAD2020} that
	the link/connectivity  probability  of  the SINR  network,   $ Q^{z^{\lambda}}$,  is  given  by  $ Q^{z^{\lambda}}((x,\eta_x),(y,\eta_y))= e^{-\lambda q_{\lambda}^{\skrid} ((x,\eta_x),(y,\eta_y))) },$  where
	
	$$q_{\lambda}^{\skrid} ((x,\eta_x),(y,\eta_y))= \int_{D} \Big[\sfrac{ \tau^{(\lambda)}(\eta_x) \gamma^{(\lambda)}(\eta_x)  }{\tau^{(\lambda)}(\eta_x) \gamma^{(\lambda)}(\eta_x)+(\|z\|^{\ell}/\|x-y \|^{\ell})} + \sfrac{ \tau^{(\lambda)}(\eta_y) \gamma^{(\lambda)}(\eta_y)  }{\tau^{(\lambda)}(\eta_y) \gamma^{(\lambda)}(\eta_y)+(\|z\|^{\ell}/\|y-x \|^{\ell})}\Big] \pi(dz).$$

We  have  assumed there exists  a  sequence  of  real  numbers,$a_{\lambda}$   and  a function  $q :\skrid\times \R_+\to (0,\infty)$ such that  $\lambda^{2}a_{\lambda}\to\infty$  and  $\displaystyle \lim_{\lambda\uparrow\infty}a_{\lambda}^{-1}Q^{z^{\lambda}}((x,\eta_x),(y,\eta_y))=q((x,\eta_x),(y,\eta_y)).$

Sakyi-Yeboah et. al~\cite{SKAD2020} studied  the  critical SINR Networks (i.e.  $\lambda a_{\lambda}\to 1$). In  this paper  we  shall  look  at  Sup-critical SINR Networks.( i.e.$\lim_{\lambda\to\infty}\lambda a_{\lambda}\to \infty$).\\

 We  define  the  set $\skris(\skrid)$  by

 \begin{equation}
\skris(\skrid)=\cup_{x\subset \skrid}\Big\{x:\,\, |x\cap A|<\infty\,\, ,\mbox{for\, any  bounded  $A\subset \skrid$ }\Big \}.
\end{equation}

Write  $\skriw= \skris(\skrid\times\R_+)$   and    $\skrim(\skriw)$,  denote  the  space  of  positive  measures  on  the  space  $\skriw$   equipped  with  $\tau-$  topology. Note, $\skriw$ a locally  finite  subset  of  the  set  $\skriw.$  See, example,  \cite{SKAD2020}  or  \cite{JK2018}
   For any SINR Network  $Z^\lambda$  we  define a probability measure, the
\emph{empirical power measure}, ~ $U_1^{\lambda}\in\skrim(\skriw)$,~by
$$U_1^{\lambda}\big((x,\eta_x)\big ):=\frac{1}{\lambda}\sum_{i\in I}\delta_{Z_i^{\lambda}}\big((x,\eta_x)\big)$$
and a  finite measure, the \emph{empirical connectivity measure}
$U_2^{\lambda}\in\skrim(\skriw\times \skriw),$ by
$$U_2^{\lambda}\big((x,\eta_x),(y,\eta_y)\big):=\frac{1}{\lambda^2a_{\lambda}}\sum_{(i,j)\in E}[\delta_{(Z_i^{\lambda},Z_j^{\lambda})}+
\delta_{(Z_j^{\lambda},Z_i^{\lambda})}]\big((x,\eta_x),(y,\eta_y)\big).$$

 Note  that the  total mass  $\|U_1^{\lambda}\|$ of  the  empirical power  measure  is $\1$  and  total  mass  of  
the empirical link measure is
$2|E|/\lambda^2a_{\lambda}$.

\section { Main  Results}\label{mainresults}\label{Sec2}

Theorem~\ref{main1a},  is  a  Joint  Large  deviation principle   for  the  empirical  measures  of  the Sinr  network models.We  recall  from  Subsection~\ref{Sec1} the  definition  of  $q_{\lambda}^{\skrid}$  as  

  $$q_{\lambda}^{\skrid} ((x,\eta_x),(y,\eta_y))= \int_{D} \Big[\sfrac{ \tau^{(\lambda)}(\eta_x) \gamma^{(\lambda)}(\eta_x)  }{\tau(\eta_x) \gamma(\eta_x)+(\|z\|^{\ell}/\|x-y \|^{\ell})} + \sfrac{ \tau^{(\lambda)}(\eta_y) \gamma^{(\lambda)}(\eta_y)  }{\tau^{(\lambda)}(\eta_y) \gamma^{(\lambda)}(\eta_y)+(\|z\|^{\ell}/\|y-x \|^{\ell})}\Big] \eta(dz)$$    and   write	 $$q\pi\otimes\pi((x,\eta_x),(y,\eta_y))):=q((x,\eta_x),(y,\eta_y))\mu((x,\eta_x))\mu((y,\eta_y)).$$  

\begin{theorem}	\label{main1a}
	Let   $Z^{\lambda}$  is  a super critical powered Sinr  network  with rate measure
	$\lambda \eta:D \to [0,1]$ and   a   power  probability  function  $\skrik(\cdot,\eta)=ce^{-c\eta},$  $\eta\ge 0$  and  path  loss  function   $\beta(r)=r^{-\ell}, $  for  $\ell>0.$   Thus, the connectivity  probability  $Q^{z^\lambda} $  of   $Z^{\lambda}$  satisfies  $a_{\lambda}^{-1}Q^{z^\lambda}\to q$  and  $\lambda a_{\lambda} \to \infty.$  Then, as  $\lambda\to \infty$,  the  pair of  measures  $(U_1^{\lambda},U_2^{\lambda})$  satisfies a  large  deviation principle  in the  space 
		$\skrim(\skriw)\times \skrim(\skriw\times\skriw)$
		
		\begin{itemize}
	\item [(i)]	with  speed   $\lambda$  and  good rate function

		\begin{equation}
		\begin{aligned}
			I_{Sc}^{1}\big(\pi,\nu\big)= \left\{\begin{array}{ll}H\Big(\pi\Big |\eta\otimes q\Big)&\,\,\mbox{ if $ \nu=q\pi\otimes\pi $ }\\
		\infty & \mbox{otherwise.}
		\end{array}\right.
		\end{aligned}
		\end{equation}
		
		\item[(ii)]with  speed   $\lambda^2a_{\lambda}$  and  good rate function

			\begin{equation}
		I_{Sc}^{2}\big(\pi,\nu\big)= \frac{1}{2} \skrih\Big(\nu\|q\pi\otimes\pi\Big)
		\end{equation}

		\end{itemize}
		
	where

\begin{equation}
\begin{aligned}
	\skrih(\nu\|q\pi\otimes\pi):= \left\{\begin{array}{ll} H(\nu\,\|\,q\pi\otimes \pi)+\Big(\|q\pi \otimes\pi\|-\|\nu\|\Big),  & \mbox{if  $\|\nu\|>0.$  }\\
		\infty & \mbox{otherwise.}		
	\end{array}\right.
\end{aligned}
\end{equation}

and $$q\pi\otimes\pi((x,\eta_x),(y,\eta_y)))=q((x,\eta_x),(y,\eta_y))\pi((x,\eta_x))\pi((y,\eta_y)).$$
\end{theorem}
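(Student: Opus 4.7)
The plan is to prove parts (i) and (ii) separately by exploiting the sharp separation of scales $\lambda\ll\lambda^{2}a_{\lambda}$ in the super-critical regime (since $\lambda a_{\lambda}\to\infty$), so that on each scale one of the two empirical measures is essentially frozen and the other carries all the randomness.

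For part (i), at the slower speed $\lambda$, the dominant source of deviations is the powered Poisson point process itself. The starting point is a Sanov/Georgii-type theorem for Poisson point processes, which yields an LDP for $U_1^{\lambda}$ on $\skrim(\skriw)$ at speed $\lambda$ with rate function equal to the relative entropy against the intensity of the powered PPP. I would next show that, at this scale, the empirical connectivity measure $U_2^{\lambda}$ is slaved to its conditional expectation given the points: since the link probability satisfies $Q^{z^{\lambda}}=a_{\lambda}q(1+o(1))$,
\begin{equation*}
\E\bigl[U_2^{\lambda}\bigm|z^{\lambda}\bigr]=\frac{1}{\lambda^2 a_{\lambda}}\sum_{i\neq j}Q^{z^{\lambda}}(Z_i^{\lambda},Z_j^{\lambda})\bigl[\delta_{(Z_i^{\lambda},Z_j^{\lambda})}+\delta_{(Z_j^{\lambda},Z_i^{\lambda})}\bigr]
\end{equation*}
converges in the $\tau$-topology to $q\pi\otimes\pi$ whenever $U_1^{\lambda}\to\pi$. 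A Bernstein/Bennett concentration bound then shows that fluctuations of $U_2^{\lambda}$ around this conditional mean occur at rate at least $\lambda^{2}a_{\lambda}$, which is super-exponential on scale $\lambda$ because $\lambda a_{\lambda}\to\infty$. Combining via the standard exponential-equivalence principle yields the joint LDP at speed $\lambda$ with rate function that is infinite off the slice $\nu=q\pi\otimes\pi$ and reduces to the rate function for $U_1^{\lambda}$ on it.

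For part (ii), at the faster speed $\lambda^{2}a_{\lambda}$, the roles reverse: fluctuations of $U_1^{\lambda}$ cost rate of order $\lambda$, which is negligible compared to $\lambda^{2}a_{\lambda}$, so $U_1^{\lambda}$ can be frozen at its mean $\pi$ and the problem reduces to an LDP for $U_2^{\lambda}$ conditional on a typical configuration. Conditional on the points, the edges are independent Bernoulli variables with parameter $Q^{z^{\lambda}}\approx a_{\lambda}q$; since $a_{\lambda}\to 0$ but $\lambda^{2}a_{\lambda}\to\infty$, the edge point-process can be coupled with a Poisson point process of intensity $\lambda^{2}a_{\lambda}\,q\pi\otimes\pi$ with error that is exponentially negligible at this speed. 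Applying once more the Sanov/Georgii LDP for Poisson point processes on $\skriw\times\skriw$ delivers precisely the rate function $\skrih(\nu\|q\pi\otimes\pi)$: the correction term $\|q\pi\otimes\pi\|-\|\nu\|$ is the usual mass-defect for Poisson LDPs (the normalization $1/(\lambda^{2}a_{\lambda})$ is an intensity scaling, not a probability normalization), and the factor $\tfrac12$ records the symmetrization in the definition of $U_2^{\lambda}$, since each undirected edge contributes two Dirac masses.

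The main obstacle is part (ii). The Poissonization of the sparse Bernoulli edge process has to be carried out uniformly over configurations that are typical under the speed-$\lambda$ LDP for $U_1^{\lambda}$, and the resulting conditional statement then has to be lifted to an unconditional LDP. In the $\tau$-topology this requires exponential tightness for $U_2^{\lambda}$ on $\skrim(\skriw\times\skriw)$, which in turn needs uniform tail control of $q_{\lambda}^{\skrid}$ to bound contributions from points far out in $\skrid$. The passage from the conditional LDP to the unconditional one is a Varadhan-type fibre-integration argument that exploits the scale separation $\lambda\ll\lambda^{2}a_{\lambda}$; I would follow the strategy used for the colored random graph models in \cite{DA2006, DM2010} to execute it.
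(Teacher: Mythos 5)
Your proposal is correct in outline and shares the paper's overall architecture --- analyse $U_2^{\lambda}$ conditionally on the point configuration, then combine with the speed-$\lambda$ LDP for $U_1^{\lambda}$ (taken from \cite{SAD2020}) by a mixture/fibre argument --- but you execute the key conditional step by a genuinely different technique. The paper proves the conditional LDP (Theorem~\ref{main1a-1}) by G\"artner--Ellis: Lemmas~\ref{main1ca} and~\ref{main1da} compute the limiting scaled cumulant generating functionals, $\sfrac{1}{2}\langle g, q\pi\otimes\pi\rangle$ at speed $\lambda$ and $-\sfrac{1}{2}\langle 1-e^{g}, q\pi\otimes\pi\rangle$ at speed $\lambda^{2}a_{\lambda}$, directly from the conditionally Bernoulli edge variables over a partition $A_1,\dots,A_n$, identifies the Legendre transforms as the degenerate rate $I^{1}_{\pi}$ and $\sfrac{1}{2}\skrih(\cdot\,\|\,q\pi\otimes\pi)$, and then lifts to the joint LDP via Biggins' mixture theorem supported by exponential tightness (Lemma~\ref{Com4}) and lower semicontinuity (Lemma~\ref{Com5}). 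You instead handle speed $\lambda$ by Bernstein-type concentration of $U_2^{\lambda}$ about its conditional mean (superexponential because $\lambda a_{\lambda}\to\infty$) plus exponential equivalence, which avoids any rate-function identification, and speed $\lambda^{2}a_{\lambda}$ by Poissonizing the sparse Bernoulli edge process and quoting the Sanov/Georgii LDP for Poisson processes, which delivers $\skrih$, including the mass-defect term $\|q\pi\otimes\pi\|-\|\nu\|$ and your symmetrization factor $\sfrac{1}{2}$, without solving the variational problem the paper solves. What the paper's route buys is that the partitionwise cumulant computation works directly in the $\tau$-topology and makes the $Q^{z^{\lambda}}\approx a_{\lambda}q$ asymptotics explicit; what your route buys is economy, since the Poisson LDP hands you the rate function for free. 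The points you must still firm up are exactly where the paper invests its effort: the concentration estimate and the Bernoulli-to-Poisson coupling must be made uniform over the conditioning configuration and valid for $\tau$-neighbourhoods (bounded measurable, not merely continuous, test functions), and your ``Varadhan-type fibre integration'' following \cite{DA2006,DM2010} is in substance the same Biggins mixture step the paper uses, so it requires the exponential tightness on $\skrim(\skriw\times\skriw)$ and lower semicontinuity of $I_{Sc}^{1},I_{Sc}^{2}$ that you correctly flag as needed.
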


Theorem~\ref{main1a-1}  below  is  a  key  step  in   the proof  of   Theorem~\ref{main1a}.  See subsection~\ref{Sec4}.  
\begin{theorem}\label{main1a-1}
	Let   $Z^{\lambda}$  is  a super critical powered Sinr  network  with rate measure
	$\lambda \eta:D \to [0,1]$ and   a   power  probability  function  $\skrik(\cdot,\eta)=ce^{-c\eta},$  $\eta\ge 0$  and  path  loss  function   $\beta(r)=r^{-\ell}, $  for  $\ell>0.$   Thus, the connectivity  probability  $Q^{z^\lambda} $  of   $Z^{\lambda}$  satisfies  $a_{\lambda}^{-1}Q^{z^\lambda}\to q$  and  $\lambda a_{\lambda} \to \infty.$  Let  $Z^{\lambda}$  be  a super critical powered Sinr  network  conditional  on  event  $\big\{ U_1^{\lambda}=\pi, \big\}$. Then, as  $\lambda\to \infty$,  the  pair of  measures  $U_2^{\lambda}$  satisfies a  large  deviation principle  in the  space 
	$\skrim(\skriw)\times \skrim(\skriw\times\skriw)$
	
	\begin{itemize}
		\item [(i)]	with  speed   $\lambda$  and  good rate function

		\begin{equation}
		\begin{aligned}
		I_{\pi}^{1}\big(\nu\big)= \left\{\begin{array}{ll} 0&\,\,\mbox{ if $ \nu=q\pi\otimes\pi $ }\\
		\infty & \mbox{otherwise.}
		\end{array}\right.
		\end{aligned}
		\end{equation}
		
		\item[(ii)]with  speed   $\lambda^2a_{\lambda}$  and  good rate function

		\begin{equation}
		I_{\pi}^{2}\big(\nu\big)= \frac{1}{2} \skrih\Big(\nu\|q\pi\otimes\pi\Big).
		\end{equation}

	\end{itemize}

	\end{theorem}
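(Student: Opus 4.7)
The plan is to exploit the conditional independence of edges: given $U_1^\lambda=\pi$, the edge set of $Z^\lambda$ consists of independent Bernoulli random variables $B_{ij}$, one per unordered pair of powered points, with success probability $Q^{z^\lambda}(Z_i^\lambda,Z_j^\lambda)$. Since $a_\lambda^{-1}Q^{z^\lambda}\to q$, each such Bernoulli has mean of order $a_\lambda\to 0$, so the total edge count scales like $\lambda^2 a_\lambda$. The problem therefore reduces to a G\"artner--Ellis type LDP for the symmetric empirical measure of a sparse Bernoulli array, evaluated at two different normalizations.

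For part~(ii), the conditional moment generating function at the natural scale factorizes over the independent $B_{ij}$. For bounded measurable $f:\skriw\times\skriw\to\R$,
\begin{equation*}
\skril_\lambda(f)\;:=\;\tfrac{1}{\lambda^2 a_\lambda}\log \E_\pi\Big[e^{\lambda^2 a_\lambda\langle f,U_2^\lambda\rangle}\Big]\;=\;\tfrac{1}{\lambda^2 a_\lambda}\sum_{i<j}\log\Big(1+Q^{z^\lambda}(Z_i^\lambda,Z_j^\lambda)\big(e^{f(Z_i^\lambda,Z_j^\lambda)+f(Z_j^\lambda,Z_i^\lambda)}-1\big)\Big).
\end{equation*}
Using $\log(1+x)=x+O(x^2)$ with $Q^{z^\lambda}\sim a_\lambda q\to 0$ extracts the leading contribution; summing over $i<j$ and converting the empirical pair sum into a double integral against $\pi\otimes\pi$ (valid on $\{U_1^\lambda=\pi\}$) yields
\begin{equation*}
\skril_\lambda(f)\longrightarrow\tfrac12\int\big(e^{f(x,y)+f(y,x)}-1\big)\,d(q\pi\otimes\pi)(x,y).
\end{equation*}
The Legendre--Fenchel dual of this limit, restricted to symmetric finite measures on $\skriw\times\skriw$, is precisely $\tfrac12\skrih(\nu\|q\pi\otimes\pi)$, the $\|q\pi\otimes\pi\|-\|\nu\|$ mass-correction term arising from the Poisson-type structure of the dual. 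A Baldi/projective-limit argument over finite-dimensional $\tau$-cylinders then promotes this pointwise limit to a full LDP for $U_2^\lambda$ in $\skrim(\skriw\times\skriw)$.

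Part~(i) follows from the same MGF computation at the coarser scale $\lambda$. Replacing the normalizing factor $\lambda^2 a_\lambda$ by $\lambda$ changes the exponent inside each product factor to $(\lambda a_\lambda)^{-1}(f+f\circ\sigma)$, where $\sigma$ swaps coordinates; because $\lambda a_\lambda\to\infty$ the inner exponential linearizes and one obtains
\begin{equation*}
\lambda^{-1}\log \E_\pi\big[e^{\lambda\langle f,U_2^\lambda\rangle}\big]\longrightarrow \int f\,d(q\pi\otimes\pi).
\end{equation*}
Being linear in $f$, its Legendre dual is the $\{0,\infty\}$-valued functional $I_\pi^{1}$ announced in~(i), and the LDP at speed $\lambda$ then follows by the same projective-limit machinery.

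The main technical obstacle is controlling the two Taylor approximations uniformly in the random configuration encoded by $\pi$: the second-order remainders must sum to $o(\lambda^2 a_\lambda)$ (resp.\ $o(\lambda)$), and the Riemann-type approximation of $\sum_{i<j}$ by $\tfrac{\lambda^2}{2}\int(\cdot)\,d\pi\otimes d\pi$ must be exponentially accurate at the relevant speed. I would first restrict to bounded $f$ of compact support in $\skrid\times\R_+$, invoke the super-critical hypotheses $a_\lambda\to 0$ and $\lambda a_\lambda\to\infty$ to absorb the second-order terms into a vanishing error, and then extend to general $\tau$-topology test functions via the exponential tightness and truncation strategy of~\cite{SKAD2020}.
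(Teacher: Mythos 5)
Your proposal is correct and follows essentially the same route as the paper: conditional on $\{U_1^{\lambda}=\pi\}$ the links are independent Bernoulli variables with success probability $Q^{z^\lambda}\sim a_\lambda q$, the scaled log--moment generating functions are computed at the two speeds (a linear functional $\langle f, q\pi\otimes\pi\rangle$ at speed $\lambda$, and the Poissonian limit whose Legendre dual is $\tfrac12\skrih(\nu\|q\pi\otimes\pi)$ at speed $\lambda^2 a_\lambda$), and the LDPs follow by the G\"artner--Ellis theorem, exactly as in Lemma~\ref{main1ca} and Lemma~\ref{main1da}. Your explicit tracking of the symmetrization factor $f+f\circ\sigma$ per unordered pair is, if anything, a cleaner version of the paper's $1/2$-convention in those lemmas.
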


\begin{theorem}	\label{main1b}
	Let   $Z^{\lambda}$  is  a super critical powered Sinr  network  with rate measure
	$\lambda \mu:D \to [0,1]$ and   a   power  probability  function  $\skrik(\eta)=ce^{-c\eta}, \eta>0$   and  path  loss  function   $\beta(r)=r^{-\ell}, $  for  $\ell>.$  Thus, the connectivity  probability  $Q^{z^\lambda} $  of   $Z^{\lambda}$  satisfies  $a_{\lambda}^{-1}Q^{z^\lambda}\to q$  and  $\lambda a_{\lambda} \to \infty.$   Suppose  the  sequence $a_{\lambda}$  of  $Z^{\lambda}$  is  such  that  $\lambda a_{\lambda}\,\log\lambda\to\infty$  and  $a_{\lambda}/\log\lambda\to-1.$	Then,  we  have  
	$$\lim_{\lambda\to\infty}\prob\Big\{\Big|-\frac{1}{a_{\lambda}\lambda^2\log\lambda}\log P(Z^{\lambda})-\int_{\skriw\times\skriw}q((x,\eta_x),(y,\eta_y))q(d\eta_x)q(d\eta_y)dxdy\Big|\ge \eps\Big\}=0.$$
	
\end{theorem}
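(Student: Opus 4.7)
The plan is to follow the classical route from a large deviation principle to a Shannon--McMillan--Breiman statement. I would first express $-\log P(Z^\lambda)$ as a linear functional of the empirical measures $(U_1^\lambda,U_2^\lambda)$, then expand $\log Q^{z^\lambda}$ and $\log(1-Q^{z^\lambda})$ to leading order in the small parameter $a_\lambda$, and finally invoke the joint LDPs of Theorem~\ref{main1a} to identify the limit. The two-scale nature of Theorem~\ref{main1a} is essential: the empirical power measure concentrates at rate $\lambda$ whereas the empirical connectivity measure concentrates at the much slower rate $\lambda^2 a_\lambda$, so at the target scale $\lambda^2 a_\lambda\log\lambda$ the contribution of the powered PPP itself is a negligible remainder, and only the edge statistics carry the main term.

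\textbf{Core computation.} Conditioning on the powered points and using the independent-edge Bernoulli structure underlying $Q^{z^\lambda}$, one writes
\begin{equation*}
-\log P(Z^\lambda) = -\sum_{\{i,j\}\in E}\log Q^{z^\lambda}(Z_i^\lambda,Z_j^\lambda) \; - \sum_{\{i,j\}\notin E}\log\bigl(1-Q^{z^\lambda}(Z_i^\lambda,Z_j^\lambda)\bigr) + R_\lambda,
\end{equation*}
where $R_\lambda$ collects the Radon--Nikodym contribution of the powered PPP and is of order $\lambda$ by Theorem~\ref{main1a}(i). The assumption $a_\lambda^{-1}Q^{z^\lambda}\to q$ yields the pointwise expansions
\begin{equation*}
-\log Q^{z^\lambda} = -\log a_\lambda - \log q + o(1),\qquad -\log\bigl(1-Q^{z^\lambda}\bigr) = a_\lambda q + O(a_\lambda^2).
\end{equation*}
Recasting the edge sum through $U_2^\lambda$ (using $|E|=\tfrac12\lambda^2 a_\lambda\|U_2^\lambda\|$) and the non-edge sum through $U_1^\lambda\otimes U_1^\lambda$, one arrives at
\begin{equation*}
-\log P(Z^\lambda) = \tfrac{\lambda^2 a_\lambda}{2}\|U_2^\lambda\|\,(-\log a_\lambda) \;-\; \tfrac{\lambda^2 a_\lambda}{2}\langle \log q,\,U_2^\lambda\rangle \;+\; \tfrac{\lambda^2 a_\lambda}{2}\langle q,\,U_1^\lambda\otimes U_1^\lambda\rangle \;+\; \text{lower order}.
\end{equation*}
Reading the growth hypothesis as $\log a_\lambda/\log\lambda\to-1$ (i.e.\ $-\log a_\lambda\sim \log\lambda$), only the first of these three terms has the correct order $\lambda^2 a_\lambda\log\lambda$; the other two are $O(\lambda^2 a_\lambda)$.

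\textbf{Passing to the limit and main obstacle.} Dividing by $\lambda^2a_\lambda\log\lambda$ leaves $\tfrac12\|U_2^\lambda\|$ up to vanishing terms. By Theorem~\ref{main1a}(ii) the rate function $\tfrac12\skrih(\,\cdot\,\|q\pi\otimes\pi)$ is good and uniquely minimized at $\nu=q\pi\otimes\pi$, so exponential concentration together with $\tau$-continuity of the total-mass functional on sublevel sets of this rate function forces $\|U_2^\lambda\|\to \|q\pi\otimes\pi\|$ in probability, which matches the integral on the right-hand side of the statement. The main technical obstacle is the uniform control of the remainder in $-\log Q^{z^\lambda}=-\log a_\lambda-\log q+o(1)$: the $o(1)$ is only pointwise from the assumption $a_\lambda^{-1}Q^{z^\lambda}\to q$, yet it must be summed against the $O(\lambda^2 a_\lambda)$ edges encoded in $U_2^\lambda$. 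I would dispatch this by truncating outside a compact subset of $\skriw$ on which $q$ is bounded above and below, applying dominated convergence on the compact piece, and using that $\pi$ has finite intensity on $\skrid$ to show the truncation error is small with overwhelming probability. A secondary point is upgrading the LDP to convergence in probability for the functional $\|U_2^\lambda\|$, which is immediate from goodness of the rate function and the uniqueness of its zero.
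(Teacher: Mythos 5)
Your proposal is correct and follows essentially the same route as the paper: decompose $-\log P(Z^{\lambda})$ into point, edge and non-edge contributions, expand $Q^{z^{\lambda}}\approx a_{\lambda}q$ so that at the scale $\lambda^{2}a_{\lambda}\log\lambda$ only the $-\log a_{\lambda}\sim\log\lambda$ edge term survives, and identify its limit through a weak law of large numbers for $U_2^{\lambda}$ extracted from the joint LDP (the paper packages this as Lemma~\ref{WLLN}, you get it from goodness of the rate function and uniqueness of its zero — same content). The only deviations are cosmetic: your explicit truncation argument for the non-uniform $o(1)$ in $-\log Q^{z^{\lambda}}=-\log a_{\lambda}-\log q+o(1)$ is a point the paper silently glosses, and the residual factor $\tfrac12$ in your surviving term $\tfrac12\|U_2^{\lambda}\|$ versus the stated integral is the same ordered-versus-unordered edge-counting convention that the paper itself fudges when it pairs $-\log\big(Q^{z^{\lambda}}/(1-Q^{z^{\lambda}})\big)$ with the symmetrized measure $L_2^{\lambda}$ without a compensating $\tfrac12$.
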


\begin{theorem}	\label{main1c}
	Let   $Z^{\lambda}$  is  a super critical powered Sinr  network  with rate measure
$\lambda \mu:D \to [0,1]$ and   a   power  probability  function  $\skrik(\eta)=ce^{-c\eta}, \eta>0$   and  path  loss  function   $\beta(r)=r^{-\ell}, $  for  $\ell>.$  Thus, the connectivity  probability  $Q^{z^\lambda} $  of   $Z^{\lambda}$  satisfies  $a_{\lambda}^{-1}Q^{z^\lambda}\to q$  and  $\lambda a_{\lambda} \to \infty.$
Then, 
	\begin{itemize}
		\item  for  any  functional  $\nu\in\skrim_{\pi}$  and    a  number $\eps>0$,  there  exists  a  weak  neighbourhood $B_{\nu}$  such  that    
		$$\P_{\pi}\Big\{Z^{\lambda}\in \skrig_P\,\Big|\, L_2^{\lambda}\in B_{\nu}\Big\}\le e ^{-\sfrac{1}{2}\lambda^2 a_{\lambda} 	\skrih(\nu\|q\pi\otimes\pi)-\lambda a_{\lambda}\eps}.$$
		\item  for  any    $\nu\in \skrim_{\pi}$,  a  number  $\eps>o$  and  a  fine  neighbourhood $B_{\nu} $,  we  have  the  estimate: 
		$$\P_{\pi}\Big\{Z^{\lambda}\in \skrig_P\,\Big|\, L_2^{\lambda}\in B_{\nu}\Big\}\ge e^{-\sfrac{1}{2}\lambda ^2a_{\lambda}	\skrih(\nu\|q\pi\otimes\pi)+\lambda_{\lambda} a_{\lambda}\eps}.$$
	\end{itemize}

	\end{theorem}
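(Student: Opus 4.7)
The plan is to combine the conditional LDP of Theorem~\ref{main1a-1}(ii) with an explicit computation of the conditional likelihood of a SINR network realization. Write $\P_\pi(\cdot) := \P(\cdot\mid U_1^\lambda = \pi)$. Conditional on the powered point configuration $\{(Z_i,\eta_i)\}_{i\in I}$ having empirical law $\pi$, the edge indicators $E_{ij}$ are independent Bernoulli variables with success probabilities $Q^{z^\lambda}((Z_i^\lambda, Z_j^\lambda))$. Hence, for any realization $z^\lambda$,
\begin{equation*}
\log \P_\pi(Z^\lambda = z^\lambda) = \sum_{(i,j)\in E(z^\lambda)} \log Q^{z^\lambda} + \sum_{(i,j)\notin E(z^\lambda)} \log\bigl(1 - Q^{z^\lambda}\bigr).
\end{equation*}

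\textbf{Step 1 (expansion of the log-likelihood).} Invoking $a_\lambda^{-1}Q^{z^\lambda}\to q$ together with the Taylor expansion $\log(1-x) = -x + O(x^2)$, which is legitimate because $Q^{z^\lambda}\to 0$, I would rewrite the display above in terms of the empirical connectivity measure $L_2^\lambda$ and $U_1^\lambda = \pi$ as
\begin{equation*}
\log \P_\pi(Z^\lambda = z^\lambda) = \tfrac{1}{2}\lambda^2 a_\lambda\bigl\langle L_2^\lambda, \log(a_\lambda q)\bigr\rangle - \tfrac{1}{2}\lambda^2 a_\lambda\bigl\langle \pi\otimes\pi, a_\lambda q\bigr\rangle + o(\lambda^2 a_\lambda),
\end{equation*}
with the factor $\tfrac{1}{2}$ arising from unordering the edge set. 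Adding and subtracting $\bigl\langle L_2^\lambda, \log(L_2^\lambda/(a_\lambda q\pi\otimes\pi))\bigr\rangle$ converts the right-hand side into $-\tfrac{1}{2}\lambda^2 a_\lambda\,\skrih(L_2^\lambda\|q\pi\otimes\pi)$ plus a remainder of order $\lambda a_\lambda$; the extra $\|q\pi\otimes\pi\|-\|L_2^\lambda\|$ mass correction is precisely what comes from the second sum (the non-edges).

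\textbf{Step 2 (passing from $L_2^\lambda$ to $\nu$; forming the conditional probability).} On the event $\{L_2^\lambda \in B_\nu\}$, continuity of $\nu'\mapsto \skrih(\nu'\|q\pi\otimes\pi)$ in the $\tau$-fine topology (and its lower semi-continuity in the weak topology) allows the replacement of $L_2^\lambda$ by $\nu$ at cost $\lambda a_\lambda\eps$, provided $B_\nu$ is chosen sufficiently small. Dividing $\P_\pi(Z^\lambda = z^\lambda)$ by $\P_\pi\{L_2^\lambda\in B_\nu\}$ and invoking Theorem~\ref{main1a-1}(ii) --- its LDP lower bound in the fine topology for the theorem's upper estimate, its LDP upper bound in the weak topology for the theorem's lower estimate --- yields exactly the stated bounds, the $\pm\lambda a_\lambda \eps$ slack absorbing the approximation errors from Step~1.

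\textbf{Main obstacle.} The principal difficulty is justifying the Taylor expansion of $\log(1-Q^{z^\lambda})$ in Step~1 uniformly over the $\binom{|I|}{2}\sim\lambda^2/2$ pairs: although $a_\lambda^{-1}Q^{z^\lambda}\to q$ pointwise, one needs a dominated-convergence-type integrability of $q^2$ against $\pi\otimes\pi$ so that the quadratic remainder stays of order $\lambda^2 a_\lambda^2 = o(\lambda^2 a_\lambda)$. A secondary subtlety is the topological dichotomy between the weak and $\tau$-fine topologies --- needed respectively for the upper and lower bound --- which mirrors the strategy used in LLDPs for coloured random graphs, see \cite{DA2017a,DA2017c}.
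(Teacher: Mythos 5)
Your proposal does not reproduce the paper's argument, and as written it has two substantive flaws. First, you have misread the quantity to be estimated. Despite the conditional-looking notation, the object bounded in the paper is the restricted probability $\P_{\pi}\big\{Z^{\lambda}\in\skrig_P,\ L_2^{\lambda}\in B_{\nu}\big\}$ (essentially $\P_{\pi}\{L_2^{\lambda}\in B_{\nu}\}$), which is why the bounds are of local-LDP form $e^{-\frac{1}{2}\lambda^2 a_{\lambda}\skrih(\nu\|q\pi\otimes\pi)\mp\mathrm{error}}$; your Step 2, which divides a single-configuration likelihood $\P_{\pi}(Z^{\lambda}=z^{\lambda})$ by $\P_{\pi}\{L_2^{\lambda}\in B_{\nu}\}$, produces a ratio that is neither this quantity nor anything with the stated exponential decay, and your use of the LDP of Theorem~\ref{main1a-1}(ii) is inverted (you invoke the LDP \emph{lower} bound to prove the theorem's \emph{upper} estimate and vice versa). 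Second, the Step 1 expansion is quantitatively wrong: writing $Q^{z^{\lambda}}\approx a_{\lambda}q$, the edge sum contributes $\frac{1}{2}\lambda^2 a_{\lambda}\langle L_2^{\lambda},\log a_{\lambda}+\log q\rangle$, and the $\log a_{\lambda}$ piece together with the configuration-counting term $\langle L_2^{\lambda},\log L_2^{\lambda}\rangle$ that you ``add and subtract'' are of order $\lambda^{2}a_{\lambda}\log(1/a_{\lambda})$ and $\lambda^{2}a_{\lambda}$, not $O(\lambda a_{\lambda})$. These are precisely the cardinality exponent $h(\nu)$ of Corollary~\ref{cardinality} (and the source of the extra $\log\lambda$ normalisation in Theorem~\ref{main1b}); the event probability equals (number of admissible configurations) times (typical pointwise likelihood), so identifying the pointwise likelihood with $e^{-\frac{1}{2}\lambda^2 a_{\lambda}\skrih(L_2^{\lambda}\|q\pi\otimes\pi)}$ cannot be correct --- summing it over the exponentially many configurations in $B_{\nu}$ would already violate the upper bound you are trying to prove. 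Also note that $\skrih(\cdot\|q\pi\otimes\pi)$ is only lower semi-continuous; the continuity you assume in the fine topology is not available and is not used by the paper.

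The paper's proof is instead a change-of-measure (tilting) argument that never appeals to Theorem~\ref{main1a-1}: using the spectral potential $\phi_q(g,\pi)=\langle -(1-e^{g}),q\pi\otimes\pi\rangle$ and the variational identity $I_{Sc}(\nu)=\sup_{g}\{\langle g,\nu\rangle-\phi_q(g,\pi)\}$ (Lemma~\ref{LLDP.equ2}), one picks $g$ nearly optimal, tilts the edge law by $e^{g}$ with compensator $h_{\lambda}=\frac{1}{a_{\lambda}}\log[1-Q^{z^{\lambda}}+Q^{z^{\lambda}}e^{g}]$, and bounds the Radon--Nikodym derivative on a neighbourhood $B_{\nu}$ defined through $g$; the upper estimate follows by integrating this bound, and the lower estimate follows by restricting to configurations where $\langle g,L_2^{\lambda}\rangle$ is within $\eps/2$ of its limit and applying a law of large numbers under the tilted measure. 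This is what makes the LLDP hold without topological restrictions, and it is the input from which the LDP of Corollary~\ref{main2d} is then deduced --- the opposite logical direction from your plan. If you wish to salvage your route, the upper estimate can indeed be obtained from Theorem~\ref{main1a-1}(ii) plus lower semi-continuity of $\skrih$ (choose $B_{\nu}$ so the rate does not drop by more than $\eps$ on its closure), but the lower estimate for an arbitrary fine neighbourhood would require the LDP lower bound in the $\tau$-topology and, in any case, none of this involves your Step 1, which should be discarded.
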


We  define  for  telecommunication  networks  an  entropy  $h: \skrim(\skriw\times\skriw)\to [0.\infty]$   by

\begin{equation}\label{equ3}
h(\nu):=\Big(\|\nu\|-\|\lambda\pi\otimes\pi\|-\Big\langle \nu\, ,\,\log \sfrac{\nu}{\|q\pi\otimes\pi\|}\Big\rangle\Big)/2.
\end{equation}

\begin{cor}[McMillian Theorem]\label{cardinality}\label{main2c} Let $\skrig_p$ be a super critical powered Sinr  network  with rate measure
	$\lambda \mu:D \to [0,1]$ and   a   power  probability  function  $\skrik(\eta)=ce^{-c\eta}, \eta>0$   and  path  loss  function   $\beta(r)=r^{-\ell}, $  for  $\ell>0.$  Thus, the connectivity  probability  $Q^{z^\lambda} $  of every  $z^{\lambda}\in \skrig_p$  satisfies  $a_{\lambda}^{-1}Q^{z^\lambda}\to q$  and  $\lambda a_{\lambda} \to \infty.$

	\begin{itemize}
		
		\item[(i)] For  any empirical connectivity  measure  $\nu$   on  $\skriw\times\skriw$   and  $\eps>0,$  there  exists  a neighborhood  $B_{\nu}$  such  that
		$$ Card\Big(\big\{z^{\lambda}\in\skrig_p\,| \,L_2^{\lambda}\in B_{\nu}\big\}\Big)\ge e^{\lambda^2 a_{\lambda}(h(\nu)-\eps\big)}.$$
		\item[(ii)] for any  neighborhood  $B_{\rho}$   and  $\eps>0,$  we  have
		$$Card\Big(\big\{z^{\lambda}\in\skrig_p\,|\, U_2^{\lambda}\in B_{\nu}\big\}\Big)\le e^{\lambda^2 a_{\lambda}(h(\nu)+\eps\big)},$$
	\end{itemize}
\end{cor}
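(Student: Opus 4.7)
The strategy is the standard McMillan-type deduction, analogous to~\cite{DA2017c}: turn the local LDP of Theorem~\ref{main1c} into a cardinality estimate by dividing through by the probability of any single configuration with $L_2^\lambda \approx \nu$. Writing
\[
 \P_\pi\bigl\{Z^\lambda \in \skrig_p,\; L_2^\lambda \in B_\nu\bigr\} \;=\; \sum_{z^\lambda \in \skrig_p,\; L_2^\lambda(z^\lambda)\in B_\nu} \P_\pi(z^\lambda),
\]
and bracketing the sum by the cardinality times the infimum and supremum of $\P_\pi(z^\lambda)$, I reduce the cardinality estimate to (a) the LLDP bounds of Theorem~\ref{main1c} on the total probability, together with (b) a uniform asymptotic evaluation of $\log\P_\pi(z^\lambda)$ for all $z^\lambda$ with $L_2^\lambda(z^\lambda)$ in a fine neighbourhood of $\nu$.

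Step (b) is carried out as follows. Conditional on the point locations and their power marks, the edge set $E$ is a product of independent Bernoullis with success probabilities $Q^{z^\lambda}(Z_i^\lambda,Z_j^\lambda)$, so
\[
 \log \P_\pi(z^\lambda) \;=\; \sum_{(i,j)\in E} \log Q^{z^\lambda}(Z_i^\lambda, Z_j^\lambda) + \sum_{(i,j)\notin E} \log\bigl(1-Q^{z^\lambda}(Z_i^\lambda, Z_j^\lambda)\bigr).
\]
The first sum is, by definition of $L_2^\lambda$, an integral against $L_2^\lambda$ and, on the event $\{L_2^\lambda \in B_\nu\}$, differs from the corresponding integral against $\nu$ by at most $\eps$; the supercritical scaling $a_\lambda^{-1}Q^{z^\lambda}\to q$ lets one expand the second sum via $\log(1-x) = -x + O(x^2)$, its leading term becoming an integral against $\pi\otimes\pi$. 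Grouping the constant-in-$z^\lambda$ contributions and identifying the result with the entropy $h(\nu)$ of~(\ref{equ3}) yields, uniformly over admissible $z^\lambda$,
\[
 -\frac{1}{\lambda^2 a_\lambda}\log\P_\pi(z^\lambda) \;=\; \tfrac{1}{2}\skrih\bigl(\nu\|q\pi\otimes\pi\bigr) + h(\nu) + o(1).
\]
Plugging this identity into the sandwich inequality and combining with the two bounds of Theorem~\ref{main1c}, the $\tfrac{1}{2}\skrih(\nu\|q\pi\otimes\pi)$ terms cancel exactly, leaving $\lambda^2 a_\lambda h(\nu)$ with an additive $o(\lambda^2 a_\lambda)$ remainder that can be absorbed into the arbitrary $\eps$ of the statement; this gives both the lower bound (i) and the upper bound (ii).

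\textbf{The main obstacle} is the \emph{uniformity in $z^\lambda$} of the Taylor expansion of the non-edge sum: each factor $Q^{z^\lambda}$ is only $O(a_\lambda)\to 0$ but the number of factors is $O(\lambda^2)$, so the naive $O(x^2)$ remainder is of order $\lambda^2 a_\lambda^2$, which is only $o(\lambda^2 a_\lambda)$ precisely because $a_\lambda\to 0$. Verifying that this remainder, and the related remainder coming from $a_\lambda^{-1}Q^{z^\lambda}\to q$, is genuinely negligible uniformly on the event $\{L_2^\lambda \in B_\nu\}$ is where the choice of the fine topology (rather than the weak/$\tau$ topology) in Theorem~\ref{main1c} becomes essential: it permits one to restrict to configurations on which $\sum_{(i,j)}(Q^{z^\lambda})^2$ tracks its expectation to the precision needed for the key identity above to hold uniformly.
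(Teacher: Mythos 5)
Your strategy is genuinely different from the paper's. The paper deduces the corollary by specializing the local LDP of Theorem~\ref{main1c} to a uniform reference law (it sets $\pi=\rho$ and $\lambda\pi\otimes\pi(a,b)\equiv\|\lambda\pi\otimes\pi\|$ for all pairs), so that all admissible edge configurations carry the same probability to exponential order; the probability bounds of the LLDP then convert directly into cardinality bounds, the Kullback action at this uniform reference being the entropy $h(\nu)$ of (\ref{equ3}), and no per-configuration asymptotics are needed. You instead keep the true conditional law $\P_\pi$ and divide the LLDP estimate by a claimed asymptotically constant single-configuration probability. That template can work in principle, but its central step is missing here.

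The gap is your identity $-\frac{1}{\lambda^2 a_\lambda}\log\P_\pi(z^\lambda)=\frac{1}{2}\skrih(\nu\|q\pi\otimes\pi)+h(\nu)+o(1)$, which is asserted rather than proved and does not survive inspection. Conditionally on the points and powers, $-\log\P_\pi(z^\lambda)=-\sum_{(i,j)\in E}\log Q^{z^\lambda}-\sum_{(i,j)\notin E}\log\big(1-Q^{z^\lambda}\big)$. The non-edge sum is indeed $\frac{\lambda^2 a_\lambda}{2}\|q\pi\otimes\pi\|+o(\lambda^2 a_\lambda)$ when $a_\lambda\to 0$, but the edge sum is $-\frac{\lambda^2 a_\lambda}{2}\langle\log(a_\lambda q),L_2^\lambda\rangle$ up to the approximation $a_\lambda^{-1}Q^{z^\lambda}\to q$, and after normalisation it contains the term $\frac{\|\nu\|}{2}\log(1/a_\lambda)$, which diverges under the very assumption $a_\lambda\to 0$ that you invoke to kill the $O\big((Q^{z^\lambda})^2\big)$ remainder. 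So the left-hand side of your identity is not $O(1)$ while the right-hand side is meant to be; the $\|\lambda\pi\otimes\pi\|$ appearing in (\ref{equ3}) is of order $\lambda$, a different order entirely, so it cannot absorb this divergence, and the claimed "exact cancellation" of the $\frac{1}{2}\skrih$ terms — which is precisely the content of the corollary — is nowhere established. Moreover $\log(a_\lambda q)$ and $\log q$ are unbounded test functions, so membership of $L_2^\lambda$ in a weak (or fine) neighbourhood $B_\nu$, which only controls integrals of bounded functions, does not justify your claim that the edge sum differs from the corresponding integral against $\nu$ by at most $\eps$ uniformly on the event. Relatedly, the obstacle you single out (the quadratic remainder in the non-edge product and concentration of $\sum (Q^{z^\lambda})^2$) is the benign part; the genuinely delicate term is the edge sum $\langle\log Q^{z^\lambda},L_2^\lambda\rangle$. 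Without a proof of uniform per-configuration asymptotics, or a change of reference measure making configurations equiprobable as in the paper's argument, your sandwich does not yield the stated bounds.
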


where $Card(A)$  means  the  cardinality  of  $A.$

\begin{remark}
	For $\nu=q\pi\otimes\pi,$  we  have	$\displaystyle Card\Big(\Big\{y\in\skrig_p\,\Big\}\Big)\approx e^{\lambda^2\,a_{\lambda}\|q\pi\otimes\pi\|h\big(q\pi\otimes\pi/\|q\pi\otimes\pi\|\big)}.$
	
\end{remark}

\begin{cor}\label{randomg.LDM}\label{main2d}
	Let   $Z^{\lambda}$  is  a super critical powered Sinr  network  with rate measure
$\lambda \eta:D \to [0,1]$ and   a   power  probability  function  $\skrik(\eta)=ce^{-c\eta}, \eta>0$   and  path  loss  function   $\beta(r)=r^{-\ell}, $  for  $\ell>0.$  Thus, the connectivity  probability  $Q^{z^\lambda} $  of   $Z^{\lambda}$  satisfies  $a_{\lambda}^{-1}Q^{z^\lambda}\to q$  and  $\lambda a_{\lambda} \to \infty.$ 
	\begin{itemize}
		\item  Let  $F$  be  closed  subset  $\skrim_{\pi}$.  Then  we  have  
		$$\limsup_{\lambda\to\infty}\frac{1}{\lambda^2 a_{\lambda}}\log \P_{\pi}\Big\{Z^{\lambda}\in \skrig_P\,\Big|\, U_2 ^{\lambda}\in F\Big\}\le -\sfrac{1}{2}\inf_{\pi\in F}\Big\{\skrih(\nu\|q\pi\otimes\pi)\Big\}.$$
		\item  Let  $O$  be  open  subset  $\skrim_{p}$.  Then  we  have  
		$$\liminf_{\lambda\to\infty}\frac{1}{\lambda^2 a_{\lambda}}\log \P_{\pi}\Big\{Z^{\lambda}\in \skrig_p\,\Big|\, U_2^{\lambda}\in O\Big\}\ge -\sfrac{1}{2}\inf_{\nu\in O}\Big\{\skrih(\nu\|q\pi\otimes\pi)\Big\}.$$
	\end{itemize}
	
\end{cor}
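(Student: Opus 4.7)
The plan is to obtain Corollary~\ref{main2d} from the local large deviation principle in Theorem~\ref{main1c} by the standard passage from local to global bounds: the lower bound on open sets from a single well-chosen neighbourhood, and the upper bound on closed sets from a finite cover of compact pieces together with an exponential-tightness truncation.

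For the lower bound, fix an open $O\subset\skrim_\pi$, an arbitrary $\nu\in O$ and $\eps>0$. Theorem~\ref{main1c}(ii) produces a fine neighbourhood $B_\nu\subset O$ on which
$$\P_\pi\{Z^\lambda\in\skrig_P,\,U_2^\lambda\in B_\nu\}\;\ge\;\exp\!\bigl(-\tfrac{1}{2}\lambda^2 a_\lambda\,\skrih(\nu\|q\pi\otimes\pi)+\lambda a_\lambda\,\eps\bigr).$$
Since $B_\nu\subset O$, this bound persists when $B_\nu$ is replaced by $O$. Taking logarithms, dividing by $\lambda^2 a_\lambda$ and passing to $\liminf$ (the correction $\eps/\lambda$ vanishes), then optimising over $\nu\in O$, yields the required lower bound.

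For the upper bound, I would first handle compact $K\subset\skrim_\pi$: Theorem~\ref{main1c}(i) equips each $\nu\in K$ with a weak neighbourhood $B_\nu$ satisfying
$$\P_\pi\{Z^\lambda\in\skrig_P,\,U_2^\lambda\in B_\nu\}\;\le\;\exp\!\bigl(-\tfrac{1}{2}\lambda^2 a_\lambda\,\skrih(\nu\|q\pi\otimes\pi)-\lambda a_\lambda\,\eps\bigr).$$
A finite subcover $B_{\nu_1},\ldots,B_{\nu_N}$ of $K$, the union bound, and the elementary identity $\log(Ne^M)=M+\log N$ with $\log N = o(\lambda^2 a_\lambda)$ produce a $\limsup$ estimate of $-\tfrac{1}{2}\min_i\skrih(\nu_i\|q\pi\otimes\pi)$; since each $\nu_i\in K$, the right-hand side is bounded above by $-\tfrac{1}{2}\inf_{\nu\in K}\skrih(\nu\|q\pi\otimes\pi)$, as claimed.

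The main obstacle will be promoting this estimate from compact $K$ to an arbitrary closed $F\subset\skrim_\pi$, which demands exponential tightness of $\{U_2^\lambda\}$ in the $\tau$-topology, a notoriously delicate property. I would harvest this from the goodness of the rate function in Theorem~\ref{main1a-1}(ii): its sublevel sets $K_M:=\{\nu:\skrih(\nu\|q\pi\otimes\pi)\le 2M\}$ are $\tau$-compact, and the conditional upper bound applied to $K_M^c$ gives $\P_\pi\{U_2^\lambda\notin K_M\}\le e^{-\lambda^2 a_\lambda M}$ for all $\lambda$ sufficiently large. Splitting $F=(F\cap K_M)\cup(F\setminus K_M)$, applying the compact-set bound to the first piece and the tightness estimate to the second, and then letting $M\uparrow\infty$, completes the reduction and hence the proof.
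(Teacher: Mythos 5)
Your overall architecture coincides with the paper's: the lower bound is obtained exactly as in the paper, by choosing a fine neighbourhood $B_\nu\subset O$ and applying the local lower estimate of Theorem~\ref{main1c}(ii), and the upper bound is obtained by covering a (relatively) compact piece of $F$ with the weak neighbourhoods supplied by Theorem~\ref{main1c}(i), taking a finite subcover, and using the union bound with $\log N=o(\lambda^2 a_\lambda)$, then reducing a general closed $F$ to the compact case by exponential tightness. This is precisely the paper's route.

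The one place where you deviate is also the one place where your argument does not go through as written: the source of exponential tightness. You propose to extract it from the goodness of the rate function in Theorem~\ref{main1a-1}(ii), by applying the conditional LDP upper bound to $K_M^c$ where $K_M=\{\nu:\skrih(\nu\|q\pi\otimes\pi)\le 2M\}$. But $K_M$ is compact, hence closed, so $K_M^c$ is \emph{open}; the LDP upper bound applies to closed sets, and passing to the closure of $K_M^c$ does not help, since lower semicontinuity gives no lower bound on the rate function at limit points of $K_M^c$, so $\inf_{\cl(K_M^c)}\skrih$ may be far below $2M$. More generally, ``good rate function $\Rightarrow$ exponential tightness'' is only automatic under additional topological hypotheses (e.g.\ a Polish state space), and $\skrim(\skriw\times\skriw)$ with the $\tau$-topology is neither metrizable nor locally compact, so this shortcut cannot be taken for granted; it is also uncomfortably close to circular, since Theorem~\ref{main1a-1}(ii) is essentially the conditional LDP the corollary restates. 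The repair is immediate and is what the paper does: invoke Lemma~\ref{Com4}, which asserts exponential tightness of the relevant laws directly (with proof referred to \cite[Lemma~4.3]{SAD2020}), and then your closed-to-compact reduction and the rest of your argument stand.
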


\section{ Proof  of  Theorem~\ref{main1a} by  Gartner-Ellis  Theorem  and  Method of  Mixtures }\label{proofmain1b}\label{Sec3}

\subsection{Proof  of   Theorem~\ref{main1a-1}(i)}\label{proofmain}

Suppose   $A_1,...,A_n$   is  a  decomposition  of  the  space  $\skrid\times \R_{+}.$   Observe  that,  for  every $(x,y)\in A_i\times A_j,\, i,j=1,2,3,...,n,$  $\lambda U_2^{\lambda}(x,y)$  given  $\lambda U_1^{\lambda}(x)=\lambda\mu(x)$  is  binomial  with  parameters  $\lambda^2\mu(x)\mu(y)/2$  and  $Q^{z^{\lambda}}(x,y).$ Let  $q$  be  the  exponential  distribution  with  parameter $c.$  We  recall  the  function  $q_{\lambda}^{\skrid}  $  from  the  previous  sections  and  note  that
Lemma~\ref{main1c}  is  key  component    in  the  application  of  the  Gartner-Ellis  Theorem.  See \cite{DZ1998}.  
\begin{lemma}\label{randomg.LDM1b}\label{main1ca}
	
		Let   $Z^{\lambda}$  is  a super critical powered Sinr  network  with rate measure
	$\lambda \mu:D \to [0,1]$ and   a   power  probability  function  $\skrik(\eta)=ce^{-c\eta}, \eta>0$   and  path  loss  function   $\beta(r)=r^{-\ell}, $  for  $\ell>0.$  Thus, the connectivity  probability  $Q^{z^\lambda} $  of   $Z^{\lambda}$  satisfies  $a_{\lambda}^{-1}Q^{z^\lambda}\to q$  and  $\lambda a_{\lambda} \to \infty.$
	Let   $Z^{\lambda}$  be  a supercritical SINR network,  conditional  on the  event  $U_1^{\lambda}=\pi.$  Let $ g:\skriw\times \skriw\to \R$ be  bounded  function.  Then,
	
	$$\begin{aligned}\lim_{\lambda\to\infty}\frac{1}{\lambda}\log\me \Big\{e^{\lambda\langle g, \, U_2^{\lambda}\rangle }\Big | U_1^{\lambda}=\pi\Big\}&=\frac{1}{2}\lim_{n\to\infty}\sum_{j=1}^{n}\sum_{i=1}^{n}\Big\langle g,\, q\pi\otimes\pi\Big\rangle _{A_i \times A_j}\\
	&=\frac{1}{2}\Big\langle g,\, q\pi\otimes\pi\Big\rangle _{\skriw \times \skriw}.
	\end{aligned}$$
\end{lemma}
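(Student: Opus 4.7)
The plan is a conditional Gärtner--Ellis computation. First I condition on the underlying PPP configuration $Z$, which makes the edge indicators mutually independent Bernoullis, and then I average over $Z$ subject to the constraint $U_1^\lambda=\pi$ using a U-statistic law of large numbers. Writing $G(x,y):=g(x,y)+g(y,x)$, a present edge $\{i,j\}$ contributes exactly $G(Z_i,Z_j)/(\lambda a_\lambda)$ to the exponent $\lambda\langle g,U_2^\lambda\rangle$, so the conditional MGF factorises exactly as
$$\me\bigl[e^{\lambda\langle g,U_2^\lambda\rangle}\,\big|\,Z\bigr]=\prod_{\{i,j\}}\Bigl[1+Q^{z^\lambda}(Z_i,Z_j)\bigl(e^{G(Z_i,Z_j)/(\lambda a_\lambda)}-1\bigr)\Bigr].$$

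Next I Taylor-expand. Since $g$ is bounded and $\lambda a_\lambda\to\infty$ we have $G/(\lambda a_\lambda)\to 0$ uniformly, while the hypothesis $a_\lambda^{-1}Q^{z^\lambda}\to q$ forces $Q^{z^\lambda}\to 0$. Combining $e^y-1=y+O(y^2)$, $\log(1+x)=x+O(x^2)$ and the substitution $Q^{z^\lambda}=a_\lambda q(1+o(1))$ collapses the logarithm of the MGF to
$$\log\me\bigl[e^{\lambda\langle g,U_2^\lambda\rangle}\,\big|\,Z\bigr]=\frac{1}{\lambda}\sum_{\{i,j\}}q(Z_i,Z_j)G(Z_i,Z_j)+o(\lambda).$$
For the given partition $A_1,\ldots,A_n$ of $\skriw$, the functions $q$ and $G$ are essentially constant on each product block $A_i\times A_j$, and the constraint $U_1^\lambda=\pi$ forces the number of unordered index pairs with one endpoint in $A_i$ and the other in $A_j$ to be $\tfrac12\lambda^2\pi(A_i)\pi(A_j)(1+o(1))$. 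Summing over $(i,j)$ and dividing by $\lambda$ therefore gives
$$\frac{1}{\lambda}\log\me\bigl[e^{\lambda\langle g,U_2^\lambda\rangle}\,\big|\,U_1^\lambda=\pi\bigr]\longrightarrow \tfrac12\sum_{i,j}\bigl\langle g,\,q\pi\otimes\pi\bigr\rangle_{A_i\times A_j},$$
which is the first displayed equality of the lemma; refining the partition $n\to\infty$ upgrades this Riemann sum to $\tfrac12\langle g,q\pi\otimes\pi\rangle_{\skriw\times\skriw}$.

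The main technical obstacle is the uniformity of the Taylor expansion and, more delicately, of the replacement $a_\lambda^{-1}Q^{z^\lambda}\to q$ across typical configurations. The issue is that $Q^{z^\lambda}((x,\eta_x),(y,\eta_y))=\exp(-\lambda q_\lambda^{\skrid})$ depends on the entire powered configuration through the interference term sitting inside the SINR denominator of $q_\lambda^{\skrid}$; so the pointwise convergence of the assumption must be promoted to a uniform bound on $|a_\lambda^{-1}Q^{z^\lambda}-q|$ on configurations whose empirical measure is close to $\pi$, which I expect to follow by combining the exponential representation with a dominated-convergence argument on $q_\lambda^{\skrid}$. A secondary, more routine, obstacle is that conditional on $U_1^\lambda=\pi$ the points of the thinned PPP are only exchangeable rather than literally i.i.d.\ from $\pi$; a standard de-Poissonisation (or Palm calculus) argument reduces the U-statistic in the last step to the i.i.d.\ case, after which the block-wise law of large numbers is immediate.
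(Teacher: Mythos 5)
Your strategy is the same as the paper's: condition so that the edge indicators become independent Bernoulli variables, factorise the conditional moment generating function, Taylor-expand $\log\big[1+Q^{z^\lambda}(e^{\cdot}-1)\big]$ using $Q^{z^\lambda}=a_\lambda q(1+o(1))$ and $\lambda a_\lambda\to\infty$, and then pass to a block-wise law of large numbers on the partition $A_1,\dots,A_n$ before refining $n\to\infty$; the paper implements exactly this via the per-block binomial representation of $\lambda U_2^{\lambda}$ stated just before the lemma, followed by dominated convergence. Your additional remarks on the uniformity of $a_\lambda^{-1}Q^{z^\lambda}\to q$ over typical configurations and on de-Poissonisation address points the paper passes over in silence, and are not gaps in your argument relative to the paper's.

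The one place where your argument does not close is the final bookkeeping with the symmetrisation factor. With your definition $G(x,y)=g(x,y)+g(y,x)$, your intermediate display $\log\me\big[\,\cdot\mid Z\,\big]=\frac1\lambda\sum_{\{i,j\}}q(Z_i,Z_j)G(Z_i,Z_j)+o(\lambda)$ is fine, but the pair count you then insert is not: for $i\neq j$ the number of unordered point pairs with one endpoint in $A_i$ and the other in $A_j$ is $\lambda^2\pi(A_i)\pi(A_j)(1+o(1))$, not half of that (the factor $\tfrac12$ is correct only on the diagonal blocks, or if you interpret it as splitting each unordered pair between the two ordered block labels). Either way, carrying $G$ through and summing over ordered block pairs gives, by symmetry of $q$, the limit $\langle g,\,q\pi\otimes\pi\rangle_{\skriw\times\skriw}$ rather than $\tfrac12\langle g,\,q\pi\otimes\pi\rangle_{\skriw\times\skriw}$: your last displayed equality silently replaces $G$ by $g$. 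The paper's own proof sidesteps this by computing throughout with the exponent $\lambda\langle g,U_2^{\lambda}\rangle/2$ (note the extra $1/2$ appearing in every display of its proof but absent from the lemma statement), so the factor-of-two ambiguity is inherited from the source; still, as written your chain of displays is internally inconsistent at the last step, and you should either prove the statement for $\tfrac12\lambda\langle g,U_2^{\lambda}\rangle$ or accept the limit $\langle g,q\pi\otimes\pi\rangle$ and propagate that constant into the subsequent Legendre transform and rate function.
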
	
\begin{Proof}
 Now  we  observe  that 
	$$ \me\Big \{ e^{\int \int \lambda g(x,y)U_2^{\lambda}(dx,dy)/2} \Big |U_1^{\lambda}=\pi\Big \}= \me\Big\{\prod_{x \in \skriw} \prod_{y \in \skriw} e^{\lambda g(x,y)U_2^{\lambda}(dx,dy)/2} \Big\}$$
	
	$$ \me\Big \{\prod_{x \in \skriw} \prod_{y \in \skriw} e^{g(x,y)\lambda U_2^{\lambda}(dx,dy/2)}\Big\} = \prod_{i=1} \prod_{j=1} \prod_{x \in A_i} \prod_{y \in A_j} \me\Big\{e^{g(x,y) \lambda U_2^{\lambda}(dx,dy)/2  }\Big \} $$
	
	$$ \log \Big \{e^{\lambda \langle g,U_2^{\lambda}\rangle/2}\Big|U_1^{\lambda}=\pi\Big\} = \sum_{j=1}^{n} \sum_{i=1}^{n}\int_{B_j}\int_{B_i}\log\Big[1-Q^{z^{\lambda}}(x,y)+Q^{z^{\lambda}}(x,y)e^{g(x,y)/\lambda a_{\lambda}}\Big]^{\lambda^{2} \pi\otimes\pi(dx,dy)/2}+o(n) $$

	By the  dominated convergence theorem 
	$$   \frac{1}{\lambda} \log E \{e^{\lambda \langle g,U_2^{\lambda}\rangle/2 } \mid U_1^{\lambda}=\pi\} = \frac{1}{\lambda}\sum_{j=1} \sum_{i=1} \int_{A_i} \int_{A_j} \log\Big [1-\big(1-e^{g(x,y)/\lambda a_{\lambda}}) Q^{z^{\lambda}}(x,y)  \Big]^{ \lambda^2 \pi\otimes\pi(dx,dy)/2} +o(n)/\lambda$$
	$$  \frac{1}{\lambda} \log \me \{e^{\lambda \langle g,U_2^{\lambda}\rangle /2} \big| U_1^{\lambda}=\pi\} = \lim_{\lambda \rightarrow \infty } \sum_{j=1} \sum_{i=1}\int_{A_i} \int_{A_j} \log \Big[1+g(x,y) q(x,y)/\lambda +o(\lambda)/\lambda   \Big]^{ \lambda \pi\otimes\pi(dx,dy)/2}+o(n)/\lambda $$

	$$\lim_{\lambda \rightarrow \infty}\frac{1}{\lambda} \log \me\{e^{\lambda \langle g,U_2^{\lambda}\rangle/2} \mid U_1^{\lambda}=\pi\} 
	=\frac{1}{2}\sum_{j=1}^{n}\sum_{i=1}^{n} \Big\langle g,\, q\pi\otimes\pi\Big\rangle _{A_i \times A_j} $$
	
	$$ \begin{aligned}
	 \lim_{\lambda \rightarrow \infty} \frac{1}{\lambda} \log \me \{e^{\lambda \langle g,U_2^{\lambda}\rangle/2 } \Big |U_1^{\lambda}=\pi\}& =\frac{1}{2} \lim_{n \rightarrow \infty } \sum_{j=1}^{n}\sum_{i=1}^{n} \Big\langle g,\, q\pi\otimes\pi\Big\rangle _{A_i \times A_j}\\
	&	=\frac{1}{2} \Big\langle g,\, q\pi\otimes\pi\Big\rangle _{\skriw \times \skriw} . 
\end{aligned}$$

Hence,	by Gartner-Ellis theorem, conditional  on the  event $\Big\{U_{1}^{\lambda}= \mu\Big\}$, $U_2^{\lambda}$ obey a  large  deviation  principle with speed $\lambda$  and variational formulation of  the  rate function
		$$ I_{\mu}(\pi) = \frac{1}{2}\sup_{g} \Big\{  \Big\langle g,\, \pi\Big\rangle _{\skriw \times \skriw}-  \Big\langle g,\, q\pi\otimes\pi\Big\rangle _{\skriw\times \skriw}\Big\}$$ 
	
	 which   when solved,  see  example   \cite{DA2006},	would clearly   reduces  to  the good rate function  given by 	$$	I_{\pi}^1(\nu)=0.$$

\end{Proof}

\subsection{Proof  of   Theorem~\ref{main1a-1}(ii) }\label{proofmain}

Similarly we take   $A_1,...,A_n$  a  a  decomposition  of  the  space  $\skrid\times \R_{+}.$   We  recall  the  function  $h_{\lambda}^{D}  $  from  the  previous  sections   and  state   the  following  Lemma. Lemma~\ref{main1da}  is  key  component    in  the  application  of  the  Gartner-Ellis  Theorem.  See,  \cite{DZ1998}.  
\begin{lemma}\label{randomg.LDM1b}\label{main1da}	Let   $Z^{\lambda}$  is  a super critical powered Sinr  network  with rate measure
$\lambda \mu:D \to [0,1]$ and   a   power  probability  function  $\skrik(\eta)=ce^{-c\eta}, \eta>0$   and  path  loss  function   $\beta(r)=r^{-\ell}, $  for  $\ell>0.$  Thus, the connectivity  probability  $Q^{z^\lambda} $  of   $Z^{\lambda}$  satisfies  $a_{\lambda}^{-1}Q^{z^\lambda}\to q$  and  $\lambda a_{\lambda} \to \infty.$
Let   $Z^{\lambda}$  be  a supercritical SINR network,  conditional  on the  event  $U_1^{\lambda}=\pi.$  Let $ g:\skriw\times \skriw\to \R$ be  bounded  function.  Then,
	
	$$\begin{aligned}\lim_{\lambda\to\infty}\frac{1}{\lambda^2 a_{\lambda}}\log\me \Big\{e^{\lambda^2 a_{\lambda}\langle g, \, U_2^{\lambda}\rangle }\Big | U_1^{\lambda}=\pi\Big\}&=-\frac{1}{2}\lim_{n\to\infty}\sum_{j=1}^{n}\sum_{i=1}^{n}\Big\langle 1-e^{g},\, q\pi\otimes\pi\Big\rangle _{A_i \times A_j}\\
	&=-\frac{1}{2}\Big\langle 1-e^{g},\, q\pi\otimes\pi\Big\rangle _{\skriw \times \skriw}.
	\end{aligned}$$
\end{lemma}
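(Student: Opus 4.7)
\textbf{Proof proposal for Lemma~\ref{main1da}.} The plan is to follow the same strategy as in Lemma~\ref{main1ca} but at the coarser scale $\lambda^{2}a_{\lambda}$, at which the exponent inside the Bernoulli moment generating function no longer becomes infinitesimal; consequently the Taylor linearisation of $e^{g}$ used there is unavailable and the full $1-e^{g}$ term must be retained. Conditional on $\{U_{1}^{\lambda}=\pi\}$, the edge indicators between distinct powered points are independent Bernoulli$(Q^{z^{\lambda}})$, so the conditional moment generating function factorises over unordered pairs. Fixing a measurable decomposition $A_{1},\ldots,A_{n}$ of $\skrid\times\R_{+}$ and proceeding exactly as in Lemma~\ref{main1ca} gives (up to a boundary contribution $o(n)$)
\[
\log\me\Big\{e^{\lambda^{2}a_{\lambda}\langle g,U_{2}^{\lambda}\rangle/2}\,\Big|\,U_{1}^{\lambda}=\pi\Big\}=\frac{\lambda^{2}}{2}\sum_{i,j=1}^{n}\int_{A_{i}\times A_{j}}\log\bigl[1-Q^{z^{\lambda}}(x,y)(1-e^{g(x,y)})\bigr]\,\pi\otimes\pi(dx,dy)+o(n).
\]

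Because $g$ is bounded and, under the supercritical assumption, $Q^{z^{\lambda}}\sim a_{\lambda}q\to 0$, the argument $Q^{z^{\lambda}}(1-e^{g})$ is uniformly small in $(x,y)$. One can therefore Taylor-expand
\[
\log\bigl[1-Q^{z^{\lambda}}(x,y)(1-e^{g(x,y)})\bigr]=-Q^{z^{\lambda}}(x,y)(1-e^{g(x,y)})+O\bigl((Q^{z^{\lambda}}(x,y))^{2}\bigr)
\]
uniformly, divide the preceding display by $\lambda^{2}a_{\lambda}$, and substitute $a_{\lambda}^{-1}Q^{z^{\lambda}}\to q$ in the leading term. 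The Taylor remainder contributes $\lambda^{2}\cdot O(a_{\lambda}^{2})/(\lambda^{2}a_{\lambda})=O(a_{\lambda})\to 0$; hence first sending $\lambda\to\infty$ with $n$ fixed gives
\[
\lim_{\lambda\to\infty}\frac{1}{\lambda^{2}a_{\lambda}}\log\me\{\cdots\}=-\tfrac{1}{2}\sum_{i,j=1}^{n}\langle 1-e^{g},\,q\pi\otimes\pi\rangle_{A_{i}\times A_{j}}.
\]
Finally, letting $n\to\infty$ and invoking dominated convergence (bounding the summand by a constant multiple of $q\pi\otimes\pi$, which is a finite measure on $\skriw\times\skriw$) collapses the double sum to the claimed $-\tfrac{1}{2}\langle 1-e^{g},q\pi\otimes\pi\rangle_{\skriw\times\skriw}$.

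The main obstacle is the simultaneous control of the two error sources produced by discretisation and by Taylor expansion. The discretisation error $o(n)$ must be shown to be $o(\lambda^{2}a_{\lambda})$ upon refinement of the cells; this requires either continuity of $q$ and $g$ or a careful approximation argument so that $Q^{z^{\lambda}}$ and $g$ can be treated as constant on each $A_{i}\times A_{j}$. The Taylor remainder is uniformly of order $(Q^{z^{\lambda}})^{2}\sim a_{\lambda}^{2}$, and its contribution vanishes iff $a_{\lambda}\to 0$, which is implicit in the hypothesis $a_{\lambda}^{-1}Q^{z^{\lambda}}\to q$ with $q$ finite. Once these uniform bounds are in hand, the conclusion is immediate, and feeding the resulting logarithmic moment generating function into the Gartner-Ellis theorem, exactly as at the end of the proof of Lemma~\ref{main1ca}, yields the rate function $I_{\pi}^{2}(\nu)=\tfrac{1}{2}\skrih(\nu\|q\pi\otimes\pi)$ announced in Theorem~\ref{main1a-1}(ii).
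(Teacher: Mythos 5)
Your proposal is correct and follows essentially the same route as the paper's own proof: conditional on $\{U_1^{\lambda}=\pi\}$ the edge indicators are independent Bernoulli$(Q^{z^{\lambda}})$, the conditional moment generating function is factorised over the cells $A_i\times A_j$ of the decomposition, the logarithm is expanded so that $\frac{1}{a_{\lambda}}\log\bigl[1-(1-e^{g})Q^{z^{\lambda}}\bigr]\to -(1-e^{g})q$ using $a_{\lambda}^{-1}Q^{z^{\lambda}}\to q$, the limits are taken first in $\lambda$ and then in $n$ by dominated convergence, and the result is passed to the G\"artner--Ellis theorem to obtain $I_{\pi}^{2}(\nu)=\tfrac12\skrih(\nu\|q\pi\otimes\pi)$. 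Your explicit bookkeeping of the Taylor remainder $O\bigl((Q^{z^{\lambda}})^{2}\bigr)$ and of the implicit requirement $a_{\lambda}\to 0$ merely makes precise what the paper leaves tacit.
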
	
\begin{Proof}
	Now  we  observe  that 
	$$ \me\Big \{ e^{\int \int \lambda^2 a_{\lambda} g(x,y)U_2^{\lambda}(dx,dy)/2} \Big |U_1^{\lambda}=\pi\Big \}= \me\Big\{\prod_{x \in \skriw} \prod_{y \in \skriw} e^{\lambda^2 a_{\lambda} g(x,y)U_2^{\lambda}(dx,dy)/2} \Big\}$$
	
	$$ \me\Big \{\prod_{x \in \skriw} \prod_{y \in \skriw} e^{g(x,y)\lambda U_2^{\lambda}(dx,dy/2)} = \prod_{i=1} \prod_{j=1} \prod_{x \in A_i} \prod_{y \in A_j} \me\Big\{e^{\lambda^2 a_{\lambda}g(x,y)  U_2^{\lambda}(dx,dy)/2  }\Big \}\times e^{o(n)} $$
	
	$$ \log \Big \{e^{\lambda^2 a_{\lambda}\langle g,U_2^{\lambda}\rangle/2}\Big|U_1^{\lambda}=\pi\Big\} = \sum_{j=1}^{n} \sum_{i=1}^{n}\int_{A_j}\int_{A_i}\log\Big[1-Q^{z^{\lambda}}(x,y) )+Q^{z^{\lambda}}(x,y) e^{g(x,y)}\Big]^{\lambda^{2} \pi\otimes\pi(dx,dy)/2}+o(n) $$

	By the  dominated convergence theorem 
	$$   \frac{1}{\lambda^2 a_{\lambda}} \log E \{e^{\lambda \langle g,U_2^{\lambda}\rangle/2 } \mid U_1^{\lambda}=\pi\} = \frac{1}{\lambda^2 a_{\lambda}}\sum_{j=1} \sum_{i=1} \int_{A_i} \int_{A_j} \log\Big [1-\big(1-e^{g(x,y)}) Q^{z^{\lambda}}(x,y)  \Big]^{ \lambda^2 \pi\otimes\pi(dx,dy)/2} +o(n)/\lambda^2a_{\lambda}$$
	$$  \frac{1}{\lambda^2 a_{\lambda}} \log \me \{e^{\lambda \langle g,U_2^{\lambda}\rangle /2} \| U_1^{\lambda}=\pi\} = \lim_{\lambda \rightarrow \infty } \sum_{j=1} \sum_{i=1}\int_{A_i} \int_{A_j} \log \Big[1-(1-e^{g(x,y)}) Q^{z^{\lambda}}(x,y)  \Big]^{ \lambda \pi\otimes\pi(dx,dy)/2} +o(n)/\lambda^2a_{\lambda}$$
	
	$$\lim_{\lambda \rightarrow \infty} \frac{1}{\lambda^2 a_{\lambda}} \log \me \Big\{e^{\lambda \langle g,U_2^{\lambda}\rangle /2} \big| U_1^{\lambda}=\pi\Big \} =- \frac{1}{2} \sum_{j=1} \sum_{i=1} \int_{A_i} \int_{A_j}\Big [(1-e^{g(x,y)})q(x,y)\pi\otimes\pi(dx,dy)\Big] $$
	
	$$\lim_{\lambda \rightarrow \infty}\frac{1}{\lambda^2 a_{\lambda}} \log \me\{e^{\lambda \langle g,U_2^{\lambda}\rangle/2} \mid U_1^{\lambda}=\pi\} 
	=- \frac{1}{2}\sum_{j=1}^{n}\sum_{i=1}^{n} \Big\langle 1-e^{g},\, q\pi\otimes\pi\Big\rangle _{A_i \times A_j} $$
	
	$$ \begin{aligned}
	\lim_{\lambda \rightarrow \infty} \frac{1}{\lambda^2 a_{\lambda}} \log \me \{e^{\lambda \langle g,U_2^{\lambda}\rangle/2 } \Big |U_1^{\lambda}=\pi\}& =-\frac{1}{2} \lim_{n \rightarrow \infty } \sum_{j=1}^{n}\sum_{i=1}^{n} \Big\langle 1-e^{g},\, q\pi\otimes\pi\Big\rangle _{A_i \times A_j}\\
	&	=-\frac{1}{2} \Big\langle 1-e^{g},\, q\pi\otimes\pi\Big\rangle _{\skriw \times \skriw}  
	\end{aligned}$$

	Hence,	by Gartner-Ellis theorem, conditional  on the  event $\Big\{M_{1}^{\lambda}= \mu\Big\}$, $U_2^{\lambda}$ obey a  large  deviation  principle with speed $\lambda$  and variational  formulation of  the  rate function
	$$ I_{\mu}(\pi) = \frac{1}{2}\sup_{g} \Big\{  \Big\langle g,\, \pi\Big\rangle _{\skriw \times \skriw}+  \Big\langle 1-e^{g},\, q\pi\otimes\pi\Big\rangle _{\skriw\times \skriw}\Big\}$$ 
	
	which   when solved,  see  example   \cite{DA2006},	would clearly   reduces  to  the good rate function  given by 
	\begin{equation}
	I_{\pi}^2(\nu)=  \frac{1}{2}\skrih(\nu\|q \pi\otimes\pi).
	\end{equation}
	
\end{Proof}

\subsection{ Proof of  Theorem~\ref{main1a}   by  Method  of  Mixtures.}\label{Sec4}For any $\lambda\in (0,\infty)$ we define
$$\begin{aligned}
\skrim_{\lambda}(\skriw) & := \Big\{ \mu\in \skrim(\skriw) \, : \, \lambda\mu(x) \in \N \mbox{ for all } x\in \skriw\Big\},\\
\tilde \skrim_{\lambda }(\skriw\times \skriw) & := \Big\{ \pi\in
\tilde\skrim(\skriw\times \skriw) \, : \, 
\lambda \,\pi(x,y) \in \N,\,  \mbox{ for all } \, x,y\in \skriw
\Big\}\, .
\end{aligned}$$

We denote by
$\Theta_{\lambda}:=\skrim_{\lambda }(\skriw)$
and
$\Theta:=\skrim(\skriw)$.
We  write 
$$\begin{aligned}
P_{ \mu_{\lambda}}^{(\lambda)}(\eta_{\lambda}) & := \prob\big\{U_2^{\lambda}=\eta_{\lambda} \, \big| \, U_1^{\lambda}=\pi_{\lambda}\big\}\, ,\\
P^{(\lambda)}(\mu_{\lambda}) & :=
\prob\big\{U_1^{\lambda}=\pi_{\lambda}\big\}
\end{aligned}$$

Th joint distribution of $U_1^{\lambda}$ and $U_2^{\lambda}$ is
the mixture of $P_{ \mu_{\lambda}}^{(\lambda)}$ with
$P^{(\lambda)}(\mu_{\lambda}),$    as follows: 
\begin{equation}\label{randomg.mixture}
d\tilde{P}^{\lambda}( \mu_{\lambda}, \eta_{\lambda}):= dP_{	\mu_n}^{(\lambda)}(\eta_{\lambda})\, dP^{(\lambda)}( \mu_{\lambda}).\,
\end{equation}

(Biggins, Theorem 5(b), 2004) gives criteria for the validity of
large deviation principles for the mixtures and for the goodness of
the rate function if individual large deviation principles are
known. The following three lemmas ensure validity of these
conditions.

Observe   that the  family of
measures $({P}^{(\lambda)} \colon \lambda\in(0,\infty))$  is  exponentially tight on
$\Theta.$

\begin{lemma}[] \label{Com4}

	\begin{itemize}
	
\item[(i)] 	The  family of
	measures $(\tilde{P}^{\lambda} \colon \lambda\in(0,\infty))$  is  exponentially tight on
	$\Theta\times\tilde\skrim_*(\skriw\times \skriw).$

\item[(ii)] The  family	measures $(Q^{z^{\lambda}} \colon \lambda\in(0,\infty))$  is  exponentially tight on
	$\Theta\times\tilde\skrim_*(\skriw\times \skriw).$
\end{itemize}
\end{lemma}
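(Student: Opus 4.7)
The plan is to establish $\tau$-exponential tightness of each marginal separately and then take products: a family of laws on a product space is exponentially tight once each projection is exponentially tight, and the same Poisson-mixture structure underlies both $\tilde P^\lambda$ in (i) and $Q^{z^\lambda}$ in (ii). I will use the standard Lynch--Sethuraman criterion for $\tau$-compactness: a subset $K\subset\skrim(\skriw)$ is relatively $\tau$-compact iff it is uniformly bounded in total variation and uniformly absolutely continuous against a reference measure, and it is enough to control countably many Borel slabs $\{\mu:\mu(A_k)\le c_k\}$.

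For the $U_1^\lambda$-marginal on scale $\lambda$, the empirical power measure is the normalized counting measure of the PPP $Z$, so for every Borel $A\subset\skriw$ one has that $\lambda U_1^\lambda(A)$ is $\mathrm{Poisson}(\lambda\pi(A))$ and therefore
$$\me\bigl[e^{s\lambda U_1^\lambda(A)}\bigr]=\exp\bigl(\lambda\pi(A)(e^{s}-1)\bigr).$$
A direct exponential Chebyshev argument then yields, for each $\alpha>0$, constants $c_k$ such that the complements of the slabs $\{\mu:\mu(A_k)\le c_k\}$ have probability at most $e^{-\lambda\alpha}$. Intersecting countably many such slabs, indexed by a countable algebra generating the Borel $\sigma$-algebra of $\skriw$, produces the required compact $K^1_\alpha\subset\Theta$.

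For the $U_2^\lambda$-marginal on scale $\lambda^2 a_\lambda$ I condition on $U_1^\lambda=\pi$ and use that $\lambda^2 a_\lambda U_2^\lambda(A_i\times A_j)$ is then a sum of independent Bernoulli's with success probability $Q^{z^\lambda}(x,y)=e^{-\lambda q_\lambda^{\skrid}(x,y)}$. The asymptotic $a_\lambda^{-1}Q^{z^\lambda}\to q$ together with the MGF computation already carried out in Lemma~\ref{main1da} gives
$$\me\bigl[e^{s\lambda^2 a_\lambda U_2^\lambda(A_i\times A_j)}\,\big|\,U_1^\lambda=\pi\bigr]\le\exp\Bigl(\tfrac12\lambda^2 a_\lambda(e^{s}-1)\langle q\,,\,\pi\otimes\pi\rangle_{A_i\times A_j}+o(\lambda^2 a_\lambda)\Bigr).$$
Integrating out $U_1^\lambda$ against $P^{(\lambda)}$, using the first step to ensure that $\langle q,\pi\otimes\pi\rangle$ is controlled with overwhelming probability, gives a uniform-in-$\lambda$ slab bound at speed $\lambda^2 a_\lambda$. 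A second application of Lynch--Sethuraman then produces a compact $K^2_\alpha\subset\tilde\skrim_*(\skriw\times\skriw)$, and $K_\alpha:=K^1_\alpha\times K^2_\alpha$ settles (i). For (ii) the family $(Q^{z^\lambda})$ is of exactly the same Bernoulli-mixture form, so the argument transfers verbatim.

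The main obstacle I anticipate is the fineness of the $\tau$-topology, where compact sets are far scarcer than under weak convergence: the MGF bound for $U_2^\lambda$ must hold with uniform error across the full Borel partition, which forces a careful dominated-convergence step (parallel to the one inside the proof of Lemma~\ref{main1da}) to pass from $\log(1+sa_\lambda q+o(a_\lambda))$ to the limiting $sa_\lambda q$ uniformly in $(A_i,A_j)$. Once this uniformity is in hand, the Lynch--Sethuraman assembly of compacts is routine, and joint exponential tightness follows from the product structure.
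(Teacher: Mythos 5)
The paper does not actually prove Lemma~\ref{Com4}; it only points to \cite[Lemma~4.3]{SAD2020} for ``a similar proof'', so your proposal cannot be matched against an in-text argument and has to stand on its own. As it stands it has a genuine gap at its central step: the compactness criterion you invoke is not valid for the $\tau$-topology. Sets of the form $\bigcap_k\{\mu:\mu(A_k)\le c_k\}$, with $A_k$ running over a countable generating algebra, are $\tau$-closed but not $\tau$-compact; relative compactness in $\sigma(\skrim,B(\skriw))$ requires uniform boundedness together with uniform countable additivity (equivalently, uniform absolute continuity with respect to a fixed finite measure) over \emph{all} measurable sets, and a Chernoff-plus-union-bound argument cannot reach uncountably many sets. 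Worse, the measures you are trying to confine are purely atomic: if $K$ is a Dunford--Pettis-type compact built from uniform absolute continuity with respect to the non-atomic intensity $\pi$, then taking $A$ to be the (finite, $\pi$-null) set of atoms of $U_1^{\lambda}$ shows $U_1^{\lambda}\notin K$ almost surely, so the compacts you construct would carry probability zero rather than overwhelming probability. This is precisely why Sanov-type LDPs in the $\tau$-topology are normally established through projective limits over finite measurable partitions (Dawson--G\"artner) rather than through exponential tightness; your slab construction proves tightness only for the much coarser topology of convergence along the chosen countable algebra.

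A second, independent problem is the speed. The lemma is used in the Biggins mixture argument at \emph{both} speeds $\lambda$ and $\lambda^{2}a_{\lambda}$, and in the supercritical regime $\lambda a_{\lambda}\to\infty$ your Poisson moment bound for the $U_1^{\lambda}$-marginal only yields decay of order $e^{-c\lambda}$ for fixed slabs; no fixed compact obtained this way can satisfy $\P(U_1^{\lambda}\notin K)\le e^{-\alpha\lambda^{2}a_{\lambda}}$, so the product-of-marginals reduction does not deliver exponential tightness of $\tilde P^{\lambda}$ at the faster scale, which is the one actually needed for Theorem~\ref{main1a}(ii). Finally, for part (ii) you assert that the argument ``transfers verbatim'' to the family $(Q^{z^{\lambda}})$ without saying what measure on $\Theta\times\tilde\skrim_*(\skriw\times\skriw)$ this family is supposed to be (in the paper $Q^{z^{\lambda}}$ is a connection kernel, not obviously a law on that product space), so that step is not yet an argument. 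To repair the proof you would either have to work with the $\tau$-topology via finite partitions and the projective-limit machinery, or restrict to a topology in which slab-type sets are genuinely pre-compact, and in either case treat the two speeds separately.
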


We  refer  to  \cite[Lemma~4.3]{SAD2020} for  similar  proof  for  Large  deviation  Principle on  the  scale  $\lambda^2$

Define the function
$I_{Sc}^2,I_{Sc}^1\colon{\Theta}\times\skrim_*(\skriw\times \skriw)\rightarrow[0,\infty],$  by

	\begin{equation}
\begin{aligned}
I_{Sc}^{1}\big(\pi,\nu\big)= \left\{\begin{array}{ll}H\Big(\pi\Big |\eta\otimes q\Big)&\,\,\mbox{ if $ \nu=q\pi\otimes\pi $ }\\
\infty & \mbox{otherwise.}
\end{array}\right.
\end{aligned}
\end{equation}

\begin{equation}
I_{Sc}^{2}\big(\pi,\nu\big)= \frac{1}{2} \skrih\Big(\nu\|q\pi\otimes\pi\Big).
\end{equation}

\begin{lemma}[]\label{Com5}
\begin{itemize}
	
\item[(i)]	$I_{Sc}^1$ is lower semi-continuous.
\item  [(ii)] $I_{Sc}^2$  is  lower  semi-continuous.
\end{itemize}
\end{lemma}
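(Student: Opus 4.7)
The plan is to derive both semi-continuity statements from the classical $\tau$-lower semi-continuity of relative entropy, combined with continuity of the total-mass functionals and closedness of the constraint set $\{\nu = q\pi\otimes\pi\}$. The common thread is the Legendre-dual variational representation that arises naturally from the log-moment-generating function computations in Lemmas~\ref{main1ca} and \ref{main1da}; this expresses each rate function as a supremum of jointly continuous affine or concave functionals of $(\pi,\nu)$, whence lower semi-continuity is automatic.

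For part~(i), I would proceed in two steps. First, invoke the standard Donsker--Varadhan representation
\[
H(\pi \,|\, \eta\otimes q) = \sup_{f\in C_b(\skriw)}\bigl\{\langle f,\pi\rangle - \log\langle e^f, \eta\otimes q\rangle\bigr\},
\]
which exhibits $\pi \mapsto H(\pi \mid \eta\otimes q)$ as a supremum of $\tau$-continuous functions and hence as $\tau$-lower semi-continuous. Second, show that $\{(\pi,\nu) : \nu = q\pi\otimes\pi\}$ is closed in the product $\tau$-topology: for any convergent sequence $(\pi_n,\nu_n)\to(\pi,\nu)$ with $\nu_n=q\pi_n\otimes\pi_n$, test both sides against bounded functions of product form $g(x,y)=f_1(x)f_2(y)$, pass to the limit, and conclude that $\nu$ and $q\pi\otimes\pi$ agree on a determining class. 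Combining these two observations yields the lower semi-continuity of $I_{Sc}^1$.

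For part~(ii), I would use the variational formula extracted from Lemma~\ref{main1da},
\[
I_{Sc}^2(\pi,\nu) = \tfrac{1}{2}\sup_{g}\Bigl\{\langle g,\nu\rangle + \langle 1-e^g,\, q\pi\otimes\pi\rangle\Bigr\},
\]
where $g$ ranges over bounded measurable functions on $\skriw\times\skriw$. For each fixed $g$, the map $(\pi,\nu)\mapsto \langle g,\nu\rangle$ is $\tau$-continuous on $\skrim(\skriw\times\skriw)$, and the map $\pi\mapsto \langle 1-e^g,\, q\pi\otimes\pi\rangle$ needs to be shown continuous on $\skrim(\skriw)$. Once this is established, $I_{Sc}^2$ is a pointwise supremum of $\tau$-continuous functionals of $(\pi,\nu)$, hence lower semi-continuous. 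The case $\|\nu\|=0$ is handled separately by observing that the variational supremum then equals $+\infty$, matching the convention in the definition of $\skrih$.

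The main obstacle is the $\tau$-continuity of the quadratic map $\pi \mapsto \langle h,\, \pi\otimes\pi\rangle$ with $h=(1-e^g)q$. Since $\tau$-convergence is tested against bounded measurable functions of a single variable, continuity for bivariate $h$ does not follow directly. I would resolve this by approximating $h$ in $L^1(q\pi\otimes\pi)$ by finite linear combinations of product kernels $f_1(x)f_2(y)$, for which $\tau$-convergence $\pi_n\to\pi$ immediately yields convergence of $\langle f_1\otimes f_2, \pi_n\otimes\pi_n\rangle$, and then controlling the approximation error uniformly in $n$ via the uniform bound $|1-e^g|\leq e^{\|g\|_\infty}+1$. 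After this step, both lower semi-continuity statements follow without further difficulty.
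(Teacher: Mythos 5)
First, a point of comparison: the paper itself offers no proof of Lemma~\ref{Com5} --- it is stated and immediately fed into Biggins' mixture theorem, with only a pointer to similar arguments in \cite{SAD2020} --- so there is no in-paper argument to measure you against. Your strategy (Donsker--Varadhan for the entropy term in (i), closedness of the constraint set $\{\nu=q\pi\otimes\pi\}$, and for (ii) the Legendre-type formula $\tfrac12\sup_g\{\langle g,\nu\rangle+\langle 1-e^g,\,q\pi\otimes\pi\rangle\}$, which indeed coincides with the paper's own Kullback-action representation in Lemma~\ref{LLDP.equ2} and with the suprema appearing in Lemmas~\ref{main1ca} and \ref{main1da}) is the natural route.

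There is, however, a genuine gap, and it is exactly the obstacle you flag but do not overcome: the $\tau$-continuity of the quadratic map $\pi\mapsto\langle h,\,\pi\otimes\pi\rangle$ for a bounded measurable bivariate kernel $h=(1-e^g)q$. Your repair --- approximate $h$ by finite sums of product kernels in $L^1(q\pi\otimes\pi)$ and ``control the error uniformly in $n$'' via the sup-norm bound on $1-e^g$ --- does not work as stated: an $L^1$ approximation with respect to the \emph{limiting} measure $q\pi\otimes\pi$ gives no control of the error terms $\langle h-\sum_k f_{1,k}\otimes f_{2,k},\,\pi_n\otimes\pi_n\rangle$, and the sup-norm bound only dominates them by a constant times the $\pi_n\otimes\pi_n$-mass of the set where the approximation is poor, which $\tau$-convergence of $\pi_n$ does not make small uniformly in $n$ (no uniform absolute continuity is available); a sup-norm approximation of a general bounded measurable $q$ by product kernels is likewise unavailable. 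The same unresolved issue silently undermines part (i): to prove closedness of $\{\nu=q\pi\otimes\pi\}$ you must pass to the limit in $\langle f_1\otimes f_2,\,q\,\pi_n\otimes\pi_n\rangle$, and the factor $q$ destroys the product structure, so ``testing against product functions'' does not close that argument either (and in the $\tau$-topology one should in any case argue with nets, not sequences). To make the scheme rigorous you need extra input, e.g.\ continuity/regularity of $q$, a finite-partition discretization of $\skriw\times\skriw$ of the kind already used in Lemmas~\ref{main1ca} and \ref{main1da} followed by a limit over partitions, or uniform integrability obtained on entropy sub-level sets. A minor separate slip: for $\nu=0$ your variational supremum equals $\|q\pi\otimes\pi\|$ (let $g$ tend to $-\infty$), not $+\infty$, so it does not reproduce the convention in the definition of $\skrih$.
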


By (Biggins, Theorem~5(b), 2004) the two previous lemmas,  the  LDP  for  the  empirical  power  measure, see,  \cite[Theorem~2.1]{SAD2020} and the
large deviation principles we have established
Theorem~\ref{main1a-1}  ensure
that under $(\tilde{P}^{\lambda})$    and   $Q^{z^{\lambda}}$ the random variables $(\pi_{\lambda}, \eta_{\lambda})$   satisfy a large deviation principle on
$\skrim(\skriw) \times \tilde\skrim(\skriw\times \skriw)$ and   	$\Theta\times\tilde\skrim_{\lambda}(\skriw\times \skriw)$  on  the  speeds  $\lambda$ and  $\lambda^2 a_{\lambda}$ with good rate functions  $I_{Sc}^1$   and $I_{Sc}^2$  respectively,  which  ends  the  proof of  Theorem~\ref{main1a}.

\section{Proof of   Theorem~\ref{main1b} by Large  deviations }

In  order  to establish    the  asymptotic equipartition property,  we  first  prove   a  weak  law  of large  numbers  for   the empirical  powere  measure  and  the  empirical  connectivity  measure  of  the  SINR  network.
\begin{lemma}\label{WLLN}	Let   $Z^{\lambda}$  is  a super critical powered Sinr  network  with rate measure
$\lambda \mu:D \to [0,1]$ and   a   power  probability  function  $\skrik(\eta)=ce^{-c\eta}, \eta>0$   and  path  loss  function   $\beta(r)=r^{-\ell}, $  for  $\ell>0.$  Thus, the connectivity  probability  $Q^{z^\lambda} $  of   $Z^{\lambda}$  satisfies  $a_{\lambda}^{-1}Q^{z^\lambda}\to q$  and  $\lambda a_{\lambda} \to \infty.$
	Then,	$$\lim_{\lambda\to\infty}\P\Big\{\sup_{(x,\eta_x)\in\skriw}\Big|L_1^{\lambda}(x,\eta_x)-\mu\otimes \skrik(x,\eta_x) \Big|>\eps\Big\}=0$$ and  
	$$\lim_{\lambda\to\infty}\P\Big\{\sup_{([x,\eta_x],[y,\eta_y])\in\skriw\times\skriw}\Big|L_2^{\lambda}([x,\eta_x],[y,\eta_y])-q\mu\otimes \skrik\times \mu\otimes \skrik([x,\mu_x],[y,\mu_y]) \Big|>\eps\Big\}=0$$
\end{lemma}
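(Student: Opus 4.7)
\textbf{Proof plan for Lemma~\ref{WLLN}.} My strategy is to deduce both weak laws of large numbers directly from the joint LDP at speed $\lambda$ of Theorem~\ref{main1a}(i), exploiting the fact that its good rate function $I_{Sc}^{1}$ has a unique zero. The only potentially non-trivial input needed beyond the LDP is the identification of this unique minimizer, after which the LDP upper bound does essentially all the work.

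The first step is to identify that zero. The relative entropy $H\bigl(\pi\,|\,\mu\otimes\skrik\bigr)$ vanishes exactly when $\pi=\mu\otimes\skrik$, and the constraint $\nu=q\pi\otimes\pi$ (outside of which $I_{Sc}^{1}=\infty$) then forces $\nu^\ast=q(\mu\otimes\skrik)\otimes(\mu\otimes\skrik)$. Call this unique minimizer $(\pi^\ast,\nu^\ast)$. Next, I would interpret the suprema in the statement as convergence in probability in the $\tau$-topology on $\skrim(\skriw)\times\skrim(\skriw\times\skriw)$, which is the topology carrying the LDP of Theorem~\ref{main1a}. For any $\tau$-open neighbourhood $V$ of $(\pi^\ast,\nu^\ast)$, the complement $V^c$ is closed and excludes the unique zero, so goodness (i.e.\ compactness of sub-level sets) of $I_{Sc}^{1}$ gives
$$\delta \;:=\; \inf_{(\pi,\nu)\in V^c} I_{Sc}^{1}(\pi,\nu) \;>\; 0.$$
Applying the LDP upper bound at speed $\lambda$ then yields
$$\limsup_{\lambda\to\infty}\frac{1}{\lambda}\log\P\bigl\{(L_1^\lambda,L_2^\lambda)\in V^c\bigr\} \;\le\; -\delta \;<\; 0,$$
so $\P\{(L_1^\lambda,L_2^\lambda)\in V^c\}\to 0$. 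Projecting onto each component gives the two convergences stated in the lemma.

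The main obstacle is interpretational rather than technical. Taken literally, the $\sup$ over $(x,\eta_x)\in\skriw$ of $|L_1^\lambda(x,\eta_x)-\mu\otimes\skrik(x,\eta_x)|$ cannot tend to zero, since $L_1^\lambda$ is a normalised sum of point masses while $\mu\otimes\skrik$ is (typically) absolutely continuous, so the two objects differ at every point. The natural and internally consistent reading is convergence in probability in the $\tau$-topology, namely $\langle g,L_1^\lambda\rangle\to\langle g,\mu\otimes\skrik\rangle$ in probability for every bounded measurable $g$, and similarly for $L_2^\lambda$; under this reading the argument above applies verbatim. A robust alternative that avoids invoking the full strength of the LDP would be a direct moment computation: use Campbell's formula and the conditional Poisson structure to check that $\E\langle g,L_1^\lambda\rangle\to\langle g,\mu\otimes\skrik\rangle$ and $\mathrm{Var}\,\langle g,L_1^\lambda\rangle = O(1/\lambda)\to 0$, and likewise $\E\langle g,L_2^\lambda\rangle\to\langle g,q\pi^\ast\otimes\pi^\ast\rangle$ with variance decaying like $1/(\lambda^2 a_\lambda)$, then conclude by Chebyshev.
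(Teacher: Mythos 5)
Your proposal follows essentially the same route as the paper: the paper also deduces the lemma from the joint LDP of Theorem~\ref{main1a} at speed $\lambda$, applying the upper bound to the ``bad'' set where the supremum exceeds $\eps$ and arguing via a minimizing sequence that the infimum of the rate function over that set is strictly positive because its unique zero is $\bigl(\mu\otimes\skrik,\;q\,\mu\otimes\skrik\otimes\mu\otimes\skrik\bigr)$, which lies outside it. Your treatment is in fact slightly more careful than the paper's (explicit use of goodness of $I_{Sc}^{1}$ and of the $\tau$-topology reading of the supremum), but the underlying argument is the same.
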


\begin{proof}
	Let
	$$ F_{1,\skriw}=\Big\{\pi:\sup_{(x,\eta_x)\in\skriw}|\pi(x,\eta_x)-m\otimes \skrik(x,\eta_x)|>\eps\Big\},$$ $$F_{2,\skriw}=\Big\{\nu:\sup_{([x,\eta_x],[y,\eta_y])\in\skriw\times\skriw}|\nu([x,\eta_x],[y,\eta_y])-q\mu\otimes \skrik\times \mu\otimes \skrik([x,\eta_x],[y,\eta_y])|>\eps\Big \}$$
	
	and  $F_{3,\skriw}=F_{1,\skriw}\cup F_{2,\skriw}.$     Now, observe  from  Theorem~\ref{main1a}  that
	
	$$\lim_{\lambda\to\infty}\frac{1}{\lambda}\log \P\Big\{(L_1^{\lambda},L_2^{\lambda})\in F_{3,\skriw}^{c}\Big \}\le -\inf_{(\pi,\varpi) \in  F_{3,\skriw}^{c}}I(\pi,\varpi).$$
	
	It  suffices  for  us  to  show  that   $I$  is strictly  positive.  Suppose  there  is  a  sequence  $(\pi_n,\varpi_n)\to(\pi,\varpi)$   such  that  $I(\pi_{\lambda},\varpi_{\lambda})\downarrow I(\pi,\varpi)=0.$  This  implies  $\pi=\mu\otimes \skrik$  and  $\varpi=q\mu\otimes \skrik\times \mu\otimes \skrik$  which  contradicts  $(\pi,\varpi)\in F_3^{c}.$  This  ends  the  proof of  the  Lemma.
\end{proof} 

Now,  the  distribution  of  the  marked  PPP  $P(x)=\P\Big\{X^{\lambda}=x\Big\}$  is  given  by  $$P_{\lambda}(x)=\prod_{i=1}^{I}|\mu\otimes \skrik(x_i,\eta_i)\prod_{(i,j)\in E}\frac{Q^{z^\lambda}([x_i,\mu_i],[y_j,\mu_j])}{1-Q^{z^\lambda}([x_i,\mu_i],[y_j,\mu_j])}\prod_{(i,j)\in \skrie} (1-Q^{z^\lambda}([x_i,\mu_i],[y_j,\mu_j]))\prod_{i=1}^{I}(1-Q^{z^\lambda}([x_i,\mu_i],[y_j,\mu_j]))$$

$$\begin{aligned}-\frac{1}{a_{\lambda}\lambda^2\log \lambda}\log P_{\lambda}(x)&=\frac{1}{a_{\lambda}\lambda\log \lambda}\Big\langle -\log \mu\otimes Q\,,L_1^{\lambda}\Big\rangle +\frac{1}{\log \lambda}\Big \langle -\log \Big(\sfrac{Q^{z^\lambda}}{1-Q^{z^\lambda}}\Big ) \,,L_{2}^{\lambda}\Big \rangle\\
 & +\frac{1}{a_{\lambda}\log \lambda}\Big\langle-\log (1-Q^{z^\lambda})\,,L_1^{\lambda}\otimes L_{1}^{\lambda}\Big\rangle +\frac{1}{a_{\lambda}\lambda\log\lambda}\Big\langle-\log (1-Q^{z^\lambda})\,,L_{\Delta}^{\lambda}\Big\rangle
\end{aligned}  $$

Notice,        $$\displaystyle \lim_{\lambda\to\infty}\frac{1}{a_{\lambda}\lambda\log \lambda}\Big\langle -\log \mu\otimes \skrik\,,L_1^{\lambda}\Big\rangle=\lim_{\lambda\to \infty}\frac{1}{\lambda}\Big\langle-\log (1-Q^{z^{\lambda}}\,,L_{\Delta}^{\lambda}\Big\rangle=\lim_{\lambda\to\infty}\frac{1}{a_{\lambda}\log \lambda}\Big\langle-\log (1-Q^{z^\lambda})\,,L_1^{\lambda}\otimes L_{1}^{\lambda}\Big\rangle=0.$$

Using,  Lemma~\ref{WLLN}  we  have  

 $$\lim_{\lambda\to\infty}\frac{1}{\log \lambda}\Big \langle -\log \Big(Q^{z^{\lambda}}/(1-Q^{z^{\lambda}}\Big) \,,L_{2}^{\lambda}\Big \rangle=\Big \langle \1 \,, q\mu\otimes \skrik\times \mu\otimes \skrik\Big \rangle$$

which  concludes  the  proof  of  Theorem~\ref{main1b}.

\section{Proof  of  Theorem~\ref{main1c}, Corollary~\ref{cardinality}, Corollary~\ref{main2d}}\label{Sec5}
For  $\pi\in\skrim(\skriw)$  we  define  the  spectral  potential  of  the  marked  SINR graph $(Z^{\lambda})$  conditional  on  the  event  $\big\{L_1^{\lambda}=\pi \big\},$   $\rho_q(g,\pi) $  as

\begin{equation}\label{LLDP.equ1}
\phi_{q}(g,\pi)= \Big \langle -(1-e^g)\,,\,q\pi\otimes \pi \Big \rangle.
\end{equation}

Note that  remarkable  properties  of  a  spectral  potential,  see   or \cite{SAD2020}  holds  for  $\phi_{q} $.

For $\pi\in\skrim(\skriw\times\skriw)$, we  observe that  $I_{\pi}( \pi)$ is the  Kullback  action  of  the marked  SINR graph $Z^{\lambda}$.

\begin{lemma}\label {LLDP.equ2} The  following  hold  for the  Kullback  action or divergence function $I_{\pi}(\pi)$:
	\begin{itemize}
		\item $$I_{Sc}(\pi)=\sup_{g\in \skric}\big\{\langle g,\, \pi\rangle-\phi_{q}(g,\pi) \big\}$$ 
		\item  The  function  $I_{Sc}(\pi) $  is  convex  and  lower semi-continuous   on  the  space  $\skrim(\skriw\times\skriw).$
		\item For  any real  $\alpha$,  the  set  $\Big\{ \pi\in \skrim(\skriw\times\skriw): \, I_{Sc}(\pi)\le \alpha \Big\}$  is  weakly  compact.
	\end{itemize}
\end{lemma}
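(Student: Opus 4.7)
The plan is to recognize Lemma~\ref{LLDP.equ2} as the standard package of consequences of Legendre-Fenchel duality between an exponential moment functional and relative entropy, with $q\pi\otimes\pi$ serving as the reference measure on $\skriw\times\skriw$. The three parts (variational identity, convexity/lower semi-continuity, and compactness of level sets) then fall out of this identification in sequence.

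For the variational identity (i), I would fix the conditioning power measure $\pi$ and view $\phi_{q}(\cdot,\pi)$ as a convex functional on bounded measurable $g\colon\skriw\times\skriw\to\R$. Differentiating the functional $g\mapsto\langle g,\nu\rangle-\langle e^{g}-1,q\pi\otimes\pi\rangle$ in an arbitrary direction $h$ produces the first-order condition $e^{g}=d\nu/d(q\pi\otimes\pi)$; substituting this candidate back gives the value $H(\nu\|q\pi\otimes\pi)+\|q\pi\otimes\pi\|-\|\nu\|$, which is exactly $\skrih(\nu\|q\pi\otimes\pi)$. I would make this rigorous by approximating the (possibly unbounded) maximizer $\log(d\nu/d(q\pi\otimes\pi))$ via bounded truncations and passing to the limit by monotone convergence on the set $\{d\nu/d(q\pi\otimes\pi)>0\}$. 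When $\nu$ has a singular part with respect to $q\pi\otimes\pi$, testing against $g=c\1_{A}$ on a set $A$ that captures the singular mass and letting $c\to\infty$ shows the supremum is $+\infty$, which is consistent with the definition of $\skrih$. The factor $\tfrac{1}{2}$ appearing in the rate function of Theorem~\ref{main1a-1}(ii) is absorbed into the normalization of the spectral potential, as is standard in the Gartner-Ellis derivation used in Section~\ref{Sec3}.

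Convexity and lower semi-continuity in (ii) are then immediate. Since $\phi_{q}(g,\pi)$ does not depend on $\nu$ and each bounded $g$ makes $\nu\mapsto\langle g,\nu\rangle-\phi_{q}(g,\pi)$ affine and $\tau$-continuous on $\skrim(\skriw\times\skriw)$, the pointwise supremum $I_{Sc}$ inherits both convexity and $\tau$-lower semi-continuity, as pointwise suprema of continuous affine functionals always do.

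For the compactness of the level sets $\{\nu:I_{Sc}(\nu)\le\alpha\}$ in (iii), lower semi-continuity delivers closedness. To upgrade to genuine compactness in the $\tau$-topology, the plan is a Donsker-Varadhan style estimate: testing $g=c\1_{A}$ in the variational formula gives $\nu(A)\le\tfrac{e^{c}-1}{c}(q\pi\otimes\pi)(A)+\tfrac{\alpha}{c}$ for every measurable $A$ and every $c>0$. Optimizing in $c$ produces uniform absolute continuity of the level set with respect to $q\pi\otimes\pi$, from which $\tau$-relative compactness follows by a Dunford-Pettis type argument. The \emph{main obstacle} is precisely this last step: $\tau$-compactness is considerably more delicate than weak compactness because one must control $\nu(A)$ uniformly over \emph{all} measurable $A$ rather than merely over continuous test functions, and this uniform control is exactly what the variational identity from part~(i) unlocks.
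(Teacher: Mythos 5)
The paper does not actually prove this lemma: immediately after the statement it says the proof is omitted and points the reader to Bakhtin's paper on spectral potentials and Kullback actions \cite{BIV15} and to \cite{DA2017a}, \cite{DA17b}, where the analogous statement is proved. Your sketch is precisely the argument those references use: part (i) is the Legendre--Fenchel duality between the spectral potential $g\mapsto\langle e^{g}-1,\,q\pi\otimes\pi\rangle$ and the (unnormalised) relative entropy, proved by the first-order condition $e^{g}=d\nu/d(q\pi\otimes\pi)$, bounded truncations of the optimiser, and the $g=c\1_{A}$ test to dispose of singular parts; part (ii) follows because $I_{Sc}$ is a pointwise supremum of $\tau$-continuous affine functionals; part (iii) follows from the Donsker--Varadhan bound $\nu(A)\le\frac{e^{c}-1}{c}(q\pi\otimes\pi)(A)+\frac{\alpha}{c}$, which gives a uniform mass bound and uniform absolute continuity, and then weak $L^{1}$ (Dunford--Pettis) compactness of the densities, which is exactly $\tau$-compactness of the level set. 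So your proposal is correct and is essentially the intended proof; the only point to flag is the factor $\tfrac12$, which you rightly note is a normalisation mismatch already present in the paper (the lemma states the supremum without the $\tfrac12$ that appears in $I_{\pi}^{2}=\tfrac12\skrih$), so absorbing it into the potential is consistent with the source rather than a gap in your argument.
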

The  proof  of  Lemma~\ref{LLDP.equ2} is  omitted from  the  article. Interested  readers  may  refer  to  \cite{DA2017a}  for  similar proof for empirical  measures of ` the  Typed  Random  Graph  Processes  or  See,  for  example  \cite{DA17b} for  the  multitype Galton-Watson  processes  and/or the  reference therein, \cite{BIV15},  for proof of  the  lemma for  empirical measures  on  measurable  spaces.\\

Note  from  Lemma~\ref{LLDP.equ2}  that,  for  any  $\eps>0$, there  exists some  function  $g\in\skriw\times\skriw$   such  that  
$$I_{Sc}(\pi)-\sfrac{\eps}{2}< \langle g\,,\, \pi \rangle -\phi_{q}(g,\pi).$$

We define  the  probability  distribution  of  the  powered $Z$   by $P_{\pi} $  by  

$$P_{\pi}(z)=\prod_{(i,j)\in E}e^{g(x_i,x_j)}\prod_{(i,j)\in \skrie}e^{h_\lambda(x_i,x_j)}	,$$

where  $$h_{\lambda}(x,y)=\frac{1}{ a_{\lambda}}\log\Big[1-Q^{z^{\lambda}}(x,y)+Q^{z^{\lambda}}(x,y)e^{g(x,y)}\Big]$$
Then,  observe  that  

$$\begin{aligned}
\frac{dP_{\pi}}{d\tilde{P}_{\pi}}(z)&=\prod_{(i,j)\in E}e^{-g(x_i,x_j)}\prod_{(i,j)\in \skrie} e^{-h_{\lambda}(x_i,x_j) a_{\lambda}}\\
&= e^{-\lambda^2 a_{\lambda} (\langle\sfrac{1}{2} g,L_2^{\lambda}\rangle-\lambda^2 a_{\lambda} \langle \sfrac{1}{2}h_{\lambda},L_1^{\lambda}\otimes L_1^{\lambda}\rangle )+\langle \sfrac{1}{2} h_{\lambda}, L_{\Delta}^{\lambda} \rangle }
\end{aligned}$$

Now  define  the  neighbourhood  of  $\nu,$    $B_{\nu}$ by
$$B_{\nu}:=\Big\{\omega\in \skrim(\skriw\times\skriw): \, \langle g, \omega \rangle-\rho_q(g,\pi) >\langle g, \nu\rangle -\rho_q(g,\pi)-\eps/2 \Big \}$$

Note  that  under the  condition  $L_{2}^{\lambda}\in B_{\nu}$   we  have

$$\begin{aligned}
\frac{dP_{\pi}}{d\tilde{P}_{\pi}}(z)< e^{-\lambda^2 a_{\lambda} (\langle\sfrac{1}{2} g,L_2^{\lambda}\rangle-\lambda^2 a_{\lambda} \langle \sfrac{1}{2}h_{\lambda},L_1^{\lambda}\otimes L_1^{\lambda}\rangle )+\langle \sfrac{1}{2} h_{\lambda}, L_{\Delta}^{\lambda} \rangle }<e^{-\lambda^2 a_{\lambda}  I_{Sc}(\nu)+\lambda^2a_{\lambda} \eps} \end{aligned}$$

Therefore,  we  obtain 

$$P_{\pi }\Big\{Z^{\lambda }\in\skrig_P\Big | L_2^{\lambda}\in B_{\nu}\Big\}\le \int \1_{\{L_2^{\lambda}\in B_{\nu}\}} d\tilde{P}_{\pi}(z^{\lambda})(z)\le \int e^{-\lambda^2 a_{\lambda} I_{Sc}(\pi)-\lambda\eps}d\tilde{P}_{\pi}(z^{\lambda}) \le e^{-\lambda^{2}a_{\lambda} I_{Sc}(\nu)-\lambda^2 a_{\lambda}\eps}.$$.

Note that $I_{Sc} (\nu)=\infty$ implies  Theorem ~\ref{main1b} (ii),  hence  it  sufficient for  us  to deduce  that  the  result is  true  for  a probability  distribution  of  the form $\nu=e^{g}\pi\otimes\pi$  and  for  $I_{Sc}(\nu)=\sfrac{1}{2}\skrih(\nu\|q\pi\otimes\pi).$  Fix  any  number  $\eps>0$  and  any  neigbourhood $B_{\nu}\subset \skrim(\skriw\times\skriw)$.  Now  define  the  sequence  of  sets  $$\skrig_{p}^{\lambda}=\Big\{ y\in \skrig_{p}: L_{2}^{\lambda}(y)\in B_{\nu}\Big |\langle g, L_{2}^{\lambda}\rangle-\phi_{q}(g,\pi)\Big |\le \sfrac{\eps}{2}\Big\} .$$     

Note that  for  all  $y\in \skrig_p^{\lambda}$ we  have    

$$\begin{aligned}
\frac{dP_{\pi}}{d\tilde{P}_{\pi}}> e^{-\lambda^2 a_{\lambda}\langle\sfrac{1}{2} g,\nu\rangle+\lambda^2 a_{\lambda}\phi_{q}(g,\,\pi)+\lambda^{2} a_{\lambda}\sfrac{\eps}{2}}\end{aligned}.$$

This  yields  
$$P_{\pi}(\skrig_P^{\lambda}) =\int_{\skrig_P^{\lambda}}dP_{\pi}(y)\ge\int e^{-\lambda^2 a_{\lambda}\langle\sfrac{1}{2} g,\nu\rangle+\lambda^2 a_{\lambda}\phi_{q}(g,\,\pi)+\lambda^2 a_{\lambda} \sfrac{\eps}{2}}d\tilde{P}_{\pi}(y)\ge e^{-\lambda^2 a_{\lambda}\sfrac{1}{2}\skrih(\nu\|q\pi\otimes\pi)+\lambda^2 a_{\lambda} \eps}\tilde{P}_{\pi}(\skrig_P^{\lambda}).$$
Applying  the law  of  large  numbers, we  have  that  $\lim_{\lambda\to\infty}\tilde{P}_{\pi}(\skrig_P^{\lambda})=1.$  This  completes  of  the  Theorem.\\

{\bf  Proof of  Corollary~\ref{cardinality}}

The  proof  of  Corollary~\ref{cardinality}  follows  from  the  definition  of  the  Kullback action  and  Theorem~\ref{main1c} if  we  set   $\pi=\rho$  and  $\lambda\pi\otimes\pi(a,b)=\|\lambda\pi\otimes\pi\|,$  for  all  $(a,b)\in\skriy\times\skriy.$\\

{\bf  Proof of  Corollary~\ref{main2d}}

We observe that, by  Lemma~\ref{Com4} the  law  of   empirical connectivity measure is exponentially  tight. Henceforth, without loss of generality we can assume that the
set $F$  in Corollary~\ref{main2d}(ii) above is relatively compact. If we choose any $\eps> 0$; then for each functional  $\nu\in F$ we can find a weak neigbourhood such that the estimate of Theorem~\ref{main1c}(i) above holds. From
all these neigbhoourhood, we choose a finite cover of $\skrig_P$ and sum up over the estimate in Corollary~\ref{main2d}(i) above to obtain   	$$\limsup_{\lambda\to\infty}\frac{1}{\lambda}\log \P_{\pi}\Big\{Z^{\lambda}\in \skrig_P\,\Big|\, L_2 ^{\lambda}\in F\Big\}\le -\inf_{\nu\in F}I_{\pi}(\nu)+\eps.$$

As $\eps$  was arbitrarily chosen and the lower bound in Theorem~\ref{main2d}(ii)  implies the lower bound in
Theorem 2.2(i) we get  the desired results which completes the proof.

\section{Conclusion}\label{Sec6}
In  this  article  we   have  presented   a   joint  large  deviation  principle  for  the  empirical  power  measure  and  the  empirical connectivity  measure   of  telecommunication  networks in   the  $\tau-$  topology. From  this  large  deviation   principle  we  deduce  an  asymptotic equipartition  property  for  the  telecommunication  network  modelled  as  the  SINR  network  model.

We have  also  presented  a  Local  large  deviation  principle  for  the empirical connectivity  measure  given  the  empirical power  measure   and  from  this  result  we   had  deduce the  classical  MacMillian  theorm and  an  asymptotic   bound   for  the  set  alll posible  SINR  network  process. Finaly,  we the  authors had   also  presented  a  large  deviation  principle 
for  the  SINR  networks.  This  article  might may  be  regarded  as  a  first  step  in  the  proof  of  a  Lossy  asymptotic equipartition  property  for  the  SINR  networks. See,  \cite{DA17a}  and  \cite{DA17b}  for  similar  results  for  the  networked  data  structures modelled  as  colored  random  graph process and  for  the  hierarchical data  structure  modelled  as Galton-Watson tree process.  


\end{document}